\documentclass[12pt]{amsart}
\usepackage{amssymb,float}
\usepackage[colorlinks=true]{hyperref}
\usepackage[all]{xy}
\usepackage[toc,page,title,titletoc,header]{appendix}
\usepackage{xcolor}
\usepackage{tikz-cd}
\setcounter{secnumdepth}{2}
\setcounter{tocdepth}{1}
\usepackage{graphicx}
\usepackage{epigraph}

\begin{document}
	\pdfoutput=1
	\theoremstyle{plain}
	\newtheorem{thm}{Theorem}[section]
	\newtheorem*{thm1}{Theorem 1}
	
	\newtheorem*{thmM}{Main Theorem}
	\newtheorem*{thmA}{Theorem A}
	\newtheorem*{thm2}{Theorem 2}
	\newtheorem{lemma}[thm]{Lemma}
	\newtheorem{lem}[thm]{Lemma}
	\newtheorem{cor}[thm]{Corollary}
	\newtheorem{pro}[thm]{Proposition}
	\newtheorem{prop}[thm]{Proposition}
	\newtheorem{variant}[thm]{Variant}
	\theoremstyle{definition}
	\newtheorem{notations}[thm]{Notations}
	\newtheorem{rem}[thm]{Remark}
	\newtheorem{rmk}[thm]{Remark}
	\newtheorem{rmks}[thm]{Remarks}
	\newtheorem{defi}[thm]{Definition}
	\newtheorem{exe}[thm]{Example}
	\newtheorem{claim}[thm]{Claim}
	\newtheorem{ass}[thm]{Assumption}
	\newtheorem{prodefi}[thm]{Proposition-Definition}
	\newtheorem{que}[thm]{Question}
	\newtheorem{con}[thm]{Conjecture}
	
	\newtheorem{exa}[thm]{Example}
	\newtheorem*{assa}{Assumption A}
	\newtheorem*{algstate}{Algebraic form of Theorem \ref{thmstattrainv}}
	
	\newtheorem*{dmlcon}{Dynamical Mordell-Lang Conjecture}
	\newtheorem*{condml}{Dynamical Mordell-Lang Conjecture}
	\newtheorem*{congb}{Geometric Bogomolov Conjecture}
	\newtheorem*{congdaocurve}{Dynamical Andr\'e-Oort Conjecture for curves}
	
	\newtheorem*{pdd}{P(d)}
	\newtheorem*{bfd}{BF(d)}

	\newtheorem*{probreal}{Realization problems}
	\numberwithin{equation}{section}
	\newcounter{elno}                
	\def\points{\list
		{\hss\llap{\upshape{(\roman{elno})}}}{\usecounter{elno}}}
	\let\endpoints=\endlist
	\newcommand{\SH}{\rm SH}
	\newcommand{\Cov}{\rm Cov}
	\newcommand{\Tan}{\rm Tan}
	\newcommand{\res}{\rm res}
	\newcommand{\Om}{\Omega}
	\newcommand{\om}{\omega}
	\newcommand{\La}{\Lambda}
	\newcommand{\la}{\lambda}
	\newcommand{\mc}{\mathcal}
	\newcommand{\mb}{\mathbb}
	\newcommand{\surj}{\twoheadrightarrow}
	\newcommand{\inj}{\hookrightarrow}
	\newcommand{\zar}{{\rm zar}}
	\newcommand{\Exc}{{\rm Exc}}
		\newcommand{\Mod}{{\rm Mod}}
	\newcommand{\an}{{\rm an}}
	\newcommand{\red}{{\rm red}}
	\newcommand{\codim}{{\rm codim}}
	\newcommand{\Supp}{{\rm Supp\;}}
	\newcommand{\Leb}{{\rm Leb}}
	\newcommand{\rank}{{\rm rank}}
	\newcommand{\Ker}{{\rm Ker \ }}
	\newcommand{\Pic}{{\rm Pic}}
	\newcommand{\Der}{{\rm Der}}
	\newcommand{\Div}{{\rm Div}}
	\newcommand{\Hom}{{\rm Hom}}
	\newcommand{\Corr}{{\rm Corr}}
	\newcommand{\im}{{\rm im}}
	\newcommand{\Spec}{{\rm Spec \,}}
	\newcommand{\Nef}{{\rm Nef \,}}
	\newcommand{\Frac}{{\rm Frac \,}}
	\newcommand{\Sing}{{\rm Sing}}
	\newcommand{\sing}{{\rm sing}}
	\newcommand{\reg}{{\rm reg}}
	\newcommand{\Char}{{\rm char\,}}
	\newcommand{\Tr}{{\rm Tr}}
	\newcommand{\ord}{{\rm ord}}
	\newcommand{\bif}{{\rm bif}}
	\newcommand{\AS}{{\rm AS}}
	\newcommand{\FS}{{\rm FS}}
	\newcommand{\CE}{{\rm CE}}
	\newcommand{\PCE}{{\rm PCE}}
	\newcommand{\WR}{{\rm WR}}
	\newcommand{\PR}{{\rm PR}}
	\newcommand{\TCE}{{\rm TCE}}
	\newcommand{\diam}{{\rm diam\,}}

	\newcommand{\dist}{{\rm dist\,}}
	\newcommand{\id}{{\rm id}}
	\newcommand{\NE}{{\rm NE}}
	\newcommand{\Gal}{{\rm Gal}}
	\newcommand{\Min}{{\rm Min \ }}
	\newcommand{\Hol}{{\rm Hol \ }}
		\newcommand{\Rat}{{\rm Rat}}
			\newcommand{\FL}{{\rm FL}}
			\newcommand{\fm}{{\rm fm}}
	
	\newcommand{\Max}{{\rm Max \ }}
	\newcommand{\Alb}{{\rm Alb}\,}
	\newcommand{\Aff}{{\rm Aff}\,}
	\newcommand{\GL}{{\rm GL}\,}        
	\newcommand{\PGL}{{\rm PGL}\,}
	\newcommand{\Bir}{{\rm Bir}}
	\newcommand{\Aut}{{\rm Aut}}
	\newcommand{\End}{{\rm End}}
	\newcommand{\Per}{{\rm Per}\,}
	\newcommand{\Preper}{{\rm Preper}\,}
	\newcommand{\ie}{{\it i.e.\/},\ }
	\newcommand{\niso}{\not\cong}
	\newcommand{\nin}{\not\in}
	\newcommand{\soplus}[1]{\stackrel{#1}{\oplus}}
	\newcommand{\by}[1]{\stackrel{#1}{\rightarrow}}
	\newcommand{\longby}[1]{\stackrel{#1}{\longrightarrow}}
	\newcommand{\vlongby}[1]{\stackrel{#1}{\mbox{\large{$\longrightarrow$}}}}
	\newcommand{\ldownarrow}{\mbox{\Large{\Large{$\downarrow$}}}}
	\newcommand{\lsearrow}{\mbox{\Large{$\searrow$}}}
	\renewcommand{\d}{\stackrel{\mbox{\scriptsize{$\bullet$}}}{}}
	\newcommand{\dlog}{{\rm dlog}\,}    
	\newcommand{\longto}{\longrightarrow}
	\newcommand{\vlongto}{\mbox{{\Large{$\longto$}}}}
	\newcommand{\limdir}[1]{{\displaystyle{\mathop{\rm lim}_{\buildrel\longrightarrow\over{#1}}}}\,}
	\newcommand{\liminv}[1]{{\displaystyle{\mathop{\rm lim}_{\buildrel\longleftarrow\over{#1}}}}\,}
	\newcommand{\norm}[1]{\mbox{$\parallel{#1}\parallel$}}
	\newcommand{\boxtensor}{{\Box\kern-9.03pt\raise1.42pt\hbox{$\times$}}}
	\newcommand{\into}{\hookrightarrow}
	\newcommand{\image}{{\rm image}\,}
	\newcommand{\Lie}{{\rm Lie}\,}      
	\newcommand{\CM}{\rm CM}
		\newcommand{\Teich}{\rm Teich\;}
	\newcommand{\sext}{\mbox{${\mathcal E}xt\,$}}  
	\newcommand{\shom}{\mbox{${\mathcal H}om\,$}}  
	\newcommand{\coker}{{\rm coker}\,}  
	\newcommand{\sm}{{\rm sm}}
	\newcommand{\pgcd}{\text{pgcd}}
	\newcommand{\trd}{\text{tr.d.}}
	\newcommand{\tensor}{\otimes}
	\newcommand{\hotimes}{\hat{\otimes}}
	
	\newcommand{\CH}{{\rm CH}}
	\newcommand{\tr}{{\rm tr}}
	\newcommand{\e}{\rm SH}
	
	\renewcommand{\iff}{\mbox{ $\Longleftrightarrow$ }}
	\newcommand{\supp}{{\rm supp}\,}
	\newcommand{\ext}[1]{\stackrel{#1}{\wedge}}
	\newcommand{\onto}{\mbox{$\,\>>>\hspace{-.5cm}\to\hspace{.15cm}$}}
	\newcommand{\propsubset}
	{\mbox{$\textstyle{
				\subseteq_{\kern-5pt\raise-1pt\hbox{\mbox{\tiny{$/$}}}}}$}}
	\newcommand{\sA}{{\mathcal A}}
	\newcommand{\sB}{{\mathcal B}}
	\newcommand{\sC}{{\mathcal C}}
	\newcommand{\sD}{{\mathcal D}}
	\newcommand{\sE}{{\mathcal E}}
	\newcommand{\sF}{{\mathcal F}}
	\newcommand{\sG}{{\mathcal G}}
	\newcommand{\sH}{{\mathcal H}}
	\newcommand{\sI}{{\mathcal I}}
	\newcommand{\sJ}{{\mathcal J}}
	\newcommand{\sK}{{\mathcal K}}
	\newcommand{\sL}{{\mathcal L}}
	\newcommand{\sM}{{\mathcal M}}
	\newcommand{\sN}{{\mathcal N}}
	\newcommand{\sO}{{\mathcal O}}
	\newcommand{\sP}{{\mathcal P}}
	\newcommand{\sQ}{{\mathcal Q}}
	\newcommand{\sR}{{\mathcal R}}
	\newcommand{\sS}{{\mathcal S}}
	\newcommand{\sT}{{\mathcal T}}
	\newcommand{\sU}{{\mathcal U}}
	\newcommand{\sV}{{\mathcal V}}
	\newcommand{\sW}{{\mathcal W}}
	\newcommand{\sX}{{\mathcal X}}
	\newcommand{\sY}{{\mathcal Y}}
	\newcommand{\sZ}{{\mathcal Z}}
	\newcommand{\A}{{\mathbb A}}
	\newcommand{\B}{{\mathbb B}}
	\newcommand{\C}{{\mathbb C}}
	\newcommand{\D}{{\mathbb D}}
	\newcommand{\E}{{\mathbb E}}
	\newcommand{\F}{{\mathbb F}}
	\newcommand{\G}{{\mathbb G}}
	\newcommand{\HH}{{\mathbb H}}
	\newcommand{\LL}{{\mathbb L}}
	\newcommand{\J}{{\mathbb J}}
	\newcommand{\M}{{\mathbb M}}
	\newcommand{\N}{{\mathbb N}}
	\renewcommand{\P}{{\mathbb P}}
	\newcommand{\Q}{{\mathbb Q}}
	\newcommand{\R}{{\mathbb R}}
	\newcommand{\T}{{\mathbb T}}
	\newcommand{\U}{{\mathbb U}}
	\newcommand{\V}{{\mathbb V}}
	\newcommand{\W}{{\mathbb W}}
	\newcommand{\X}{{\mathbb X}}
	\newcommand{\Y}{{\mathbb Y}}
	\newcommand{\Z}{{\mathbb Z}}

	\newcommand{\bk}{{\mathbf{k}}}
	
	\newcommand{\bp}{{\mathbf{p}}}
	\newcommand{\ep}{\varepsilon}
	\newcommand{\bbk}{{\overline{\mathbf{k}}}}
	\newcommand{\Fix}{\mathrm{Fix}}
	
	\newcommand{\tor}{{\mathrm{tor}}}
	\renewcommand{\div}{{\mathrm{div}}}
	
	\newcommand{\trdeg}{{\mathrm{trdeg}}}
	\newcommand{\Stab}{{\mathrm{Stab}}}
	
	\newcommand{\OK}{{\overline{K}}}
	\newcommand{\ok}{{\overline{k}}}
	
	\newcommand{\cf}{{\color{red} [c.f. ?]}}
	\newcommand{\jy}{\color{red} jy:}

	\title[]{The multiplier spectrum morphism is generically injective}
	
	\author{Zhuchao Ji}
	
	\address{Institute for Theoretical Sciences, Westlake University, Hangzhou 310030, China}
	
	\email{jizhuchao@westlake.edu.cn}
	
	\author{Junyi Xie}

	
	\address{Beijing International Center for Mathematical Research, Peking University, Beijing 100871, China}
	
	\email{xiejunyi@bicmr.pku.edu.cn}

	
	\date{\today}

	\bibliographystyle{alpha}
	
	\maketitle
	
	\begin{abstract}
In this paper, we consider the multiplier spectrum of periodic points, which  is a natural morphism defined on the moduli space of rational maps on the projective line. A celebrated theorem of McMullen asserts that aside from the well-understood flexible Latt\`es family, the multiplier spectrum morphism is quasi-finite. In this paper, we strengthen McMullen's theorem by showing that the multiplier spectrum morphism is generically injective.  This answers a question of McMullen and Poonen. 	\end{abstract}

	\section{Introduction}
		\subsection{The multiplier spectrum morphism}
	For $d\geq 2,$ let $\Rat_d(\C)$ be the space of degree $d$ rational maps on $\P^1(\C)$.  
	It is a smooth irreducible affine variety of dimension $2d+1$ \cite{Silverman2012}.   A rational map  is called \emph{Latt\`es} if it is semi-conjugate to an endomorphism on an elliptic curve. A Latt\`es map $f$ is called \emph{flexible Latt\`es} if one can continuously vary the complex structure of the elliptic curve to get a family of Latt\`es maps passing through $f$. The structure of flexible Latt\`es maps is well-understood \cite[Lemma 5.5]{milnor2006lattes}.
	Let $\FL_d(\C)\subseteq \Rat_d(\C)$ be the locus of flexible Latt\`es maps, which is Zariski closed in $\Rat_d(\C)$.
	The group $\PGL_2(\C)= \Aut(\P^1(\C))$ acts on $\Rat_d(\C)$ by conjugacy. The geometric quotient 
	$$\sM_d(\C):=\Rat_d(\C)/\PGL_2(\C)$$ is the (coarse) \emph{moduli space} of endomorphisms  of degree $d$ \cite{Silverman2012}.
	The moduli space $\sM_d(\C)=\Spec (\sO(\Rat_d(\C)))^{\PGL_2(\C)}$ is an irreducible affine variety of dimension $2d-2$ \cite[Theorem 4.36(c)]{Silverman2007}, which is also a complex orbifold \cite{Milnor1993}, \cite{milnor2011dynamics}.
	Let $\Psi: \Rat_d(\C)\to \sM_d(\C)$ be the quotient morphism.  We set 
	$$\sM_d^{\star}(\C):=\sM_d(\C)\setminus \Psi(\FL_d(\C)).$$
We note that $ \FL_d(\C)=\emptyset$ when $d$ is not a square number,  and $ \Psi(\FL_d(\C))$ is an  algebraic curve when $d$ is a square number.
\par 
As $\sM_d$ is affine, to understand the geometry of $\sM_d$, we only need to understand its  ring of functions.
There is  a natural  dynamically interesting family of morphisms from $\sM_d(\C)$ to  affine spaces, which we call the \emph{multiplier spectrum morphisms}. 
These morphisms give a natural system of 
functions on $\sM_d(\C)$, which can be viewed as analogies of the theta functions on the moduli space of elliptic curves (c.f. \cite[Remark 5.9]{Silverman2023}).

We now recall the construction.  For every $f\in \Rat_d(\C)$ and $n\geq 1$, $f^n$ has exactly $N_n=d^n+1$ fixed points counted with multiplicity. 
The \emph{multiplier} of a $f^n$-fixed point $x$ is the differential $df^n(x)\in \C$. Using elementary symmetric polynomials, their multipliers define a point  $S_n(f)\in \C^{N_n}$. 
The \emph{multiplier spectrum} of $f$ is the sequence $S_n(f), n\geq 1.$  Since $S_n$ take the same value in a conjugacy class of rational maps, each $S_n$ defines a morphism $S_n:\sM_d(\C)\to \C^{N_n}$. Let $\tau_{d,n}$  be the morphism
\begin{align*}
\tau_{d,n}:\sM_d(\C)&\to \C^{N_1}\times\cdots\times \C^{N_n},\\
[f]&\mapsto (S_1(f),\dots,S_n(f)).
\end{align*}
Here for $f\in \Rat_d(\C)$, we define $\tau_{d,n}(f):=\tau_{d,n}([f])$.
\par For each $n\geq 1$, set $$R_n:=\{([f],[g])\in \sM_d(\C)^2|\,\,\tau_{d,n}([f])=\tau_{d,n}([g])\}.$$
Then  $R_n$  form a decreasing sequence of Zariski closed
subsets  of $ \sM_d(\C)^2$.
By the Noetherianity, the sequence $R_n$ is stable for $n$ sufficiently large. Hence there exists a minimal positive  integer $m_d$ such that  $\tau_{d,m_d}(f)=\tau_{d,m_d}(g)$ implies that  $\tau_{d,n}(f)=\tau_{d,n}(g)$ for every $n\geq 1$, i.e. $f$ and $g$ have the same multiplier spectrum.  We define $$\tau_d:= \tau_{d,m_d}.$$
\par It is well-known that elements in an irreducible component of $\FL_d(\C)$ have the same multiplier spectrum.   

The following remarkable theorem of McMullen \cite{McMullen1987} says that outside $\FL_d(\C)$, the multiplier spectrum determines the conjugacy class of rational maps up to finitely many choices.
\begin{thm}[McMullen]\label{McMullen}
For every $d\geq 2$, the morphism $$\tau_d:\sM_d^{\star}(\C)\to \C^{N_1}\times\cdots\times \C^{N_{m_d}}$$ is quasi-finite.
\end{thm}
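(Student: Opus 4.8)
The plan is to argue by contradiction, reducing to a nonconstant algebraic family of rational maps with constant periodic multipliers and then showing that such a family can only consist of flexible Latt\`es maps. Since $\tau_d$ is a morphism of affine varieties, quasi-finiteness over $\sM_d^*(\C)$ means exactly that each fibre $\tau_d^{-1}(y)\cap\sM_d^*(\C)$ is finite, so I would suppose that one of them is infinite. Its Zariski closure $Z\subseteq\sM_d(\C)$ is then positive dimensional, is not contained in $\Psi(\FL_d(\C))$, and satisfies $\tau_d\equiv y$ on $Z$; choosing an irreducible curve $C\subseteq Z$ meeting $\sM_d^*(\C)$, lifting it through $\Psi:\Rat_d(\C)\to\sM_d(\C)$, normalising, and deleting finitely many points, I would obtain an algebraic family $(f_t)_{t\in U}$ in $\Rat_d(\C)$ over a smooth affine curve $U$ with $t\mapsto[f_t]$ nonconstant, with $[f_t]\in\sM_d^*(\C)$ for every $t$, and with $\tau_{d,m_d}(f_t)$ independent of $t$. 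By the defining property of $m_d$, every $S_n(f_t)$ is then independent of $t$; since each periodic multiplier varies holomorphically over the connected base and takes values in a fixed finite set, the multiplier of every periodic cycle of $f_t$ is in fact constant along the family.

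I would then pass to deformation theory. Constancy of all periodic multipliers prevents any periodic point from bifurcating among attracting, indifferent and repelling behaviour, so $(f_t)$ is $J$-stable in the sense of Ma\~n\'e--Sad--Sullivan and McMullen--Sullivan; fixing a basepoint $t_0\in U$, the $\lambda$-lemma gives a holomorphic motion of $\P^1(\C)$ over $U$ conjugating $f_{t_0}$ to $f_t$, with quasiconformal motion maps $\phi_t$. Then $\mu_t:=\overline\partial\phi_t/\partial\phi_t$ is a measurable $f_{t_0}$-invariant Beltrami differential, holomorphic and nonconstant in $t$ (otherwise all $f_t$ would be M\"obius conjugate), and $\phi_t$ preserves the multiplier of every repelling periodic cycle of $f_{t_0}$. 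The map $t\mapsto\mu_t$ descends to a nonconstant holomorphic map from $U$ into the quasiconformal deformation space $\mathrm{Def}(f_{t_0})$, which by the McMullen--Sullivan structure theorem splits, up to the finite group $\Aut(f_{t_0})$, as a product of a Fatou factor (Teichm\"uller spaces of the grand-orbit quotient orbifolds attached to the Fatou set) and a Julia factor (invariant Beltrami differentials supported on $J(f_{t_0})$, nonzero precisely when $f_{t_0}$ carries a measurable invariant line field on a positive-measure subset of its Julia set). Hence at least one of these two factors is moved nontrivially along $(f_t)$.

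The Fatou factor cannot be the one that moves, because Teichm\"uller deformations of the Fatou quotient orbifolds (changing, say, the modulus of a Siegel disc or of a Herman ring, or the conformal structure of an attracting or parabolic basin) are transcendental: a holomorphic parametrisation of such a deformation has no nonconstant algebraic curve in its image inside $\Rat_d(\C)$, so the algebraic family $t\mapsto f_t$ must stay within a single such structure. Therefore the Julia factor moves, i.e. $f_{t_0}$ carries a nonzero $f_{t_0}$-invariant Beltrami differential supported on $J(f_{t_0})$ --- equivalently a measurable invariant line field on a positive-measure subset of its Julia set --- and this line field varies algebraically through the family. The final and decisive input is then a rigidity statement: a rational map carrying such a line field inside a nonconstant algebraic family with constant periodic multipliers is, up to conjugacy, a flexible Latt\`es map. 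Morally, the algebraically varying invariant line field is so rigid that its pullbacks under the iterates of $f_{t_0}$ assemble into an $f_{t_0}$-invariant flat (affine) orbifold structure on $\P^1(\C)$, which only an affine map can support; this contradicts $[f_{t_0}]\in\sM_d^*(\C)$ and proves quasi-finiteness.

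I expect this last step --- together with the transcendence input just before it --- to be the main obstacle: excluding invariant line fields on the Julia set is exactly the kind of statement that is open for a general rational map (the no-invariant-line-fields conjecture), and the whole force of McMullen's theorem is that the \emph{algebraicity} of the family, invisible to a single map, is what makes the rigidity argument go through, via a delicate analysis of how an algebraically varying invariant line field interacts with the dynamics and the critical orbits.
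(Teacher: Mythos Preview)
The paper does not prove this theorem; it is quoted from McMullen \cite{McMullen1987} and used as a black box. There is therefore no proof in the paper to compare against.

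Your outline nonetheless tracks McMullen's original argument fairly closely: reduce to a non-isotrivial algebraic curve on which all periodic multipliers are constant, deduce $J$-stability, pass to quasiconformal deformation theory and the Fatou/Julia splitting of the deformation space, eliminate the Fatou contribution, and finish with the rigidity of algebraically varying invariant line fields on the Julia set. You also correctly identify that last step as the heart of the matter.

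The one genuine weakness is your elimination of the Fatou factor. The assertion that Fatou-side Teichm\"uller deformations are ``transcendental'' and hence cannot occur along an algebraic curve in $\Rat_d(\C)$ is not correct as stated: algebraic one-parameter families with, say, a varying Herman-ring modulus certainly exist. McMullen's mechanism here is dynamical, not arithmetic. Constancy of the attracting multipliers kills the attracting-basin moduli directly (the multiplier \emph{is} the modulus of the quotient torus); parabolic cycles are excluded because parabolic parameters lie in the bifurcation locus, contradicting $J$-stability; and the remaining Fatou moduli (Herman rings, extra moduli coming from critical orbits landing in basins) are handled by a finer analysis of which invariant Beltrami differentials can arise from an algebraic deformation with all periodic multipliers frozen. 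Your sketch glosses over precisely this reduction, which is a substantive part of McMullen's proof rather than a formality.
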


	\subsection{The multiplier spectrum morphism is generically injective}
	
		\begin{defi}
		For a point $x\in\sM_d(\C)$, we say that $\tau_d$ is injective at $x$ if $\tau_d^{-1}(\tau_d(x))=\left\{x\right\}$.
		For  a subset $X\subset \sM_d(\C)$, we say that $\tau_d$ is injective on $X$ if $\tau_d$ is injective at every $x\in X$.
	\end{defi}
\par We quote the following question about the injectivity of $\tau_d$  from  McMullen \cite[Page 489]{McMullen1987}:
		\medskip
	\par {\em Noetherian properties imply there are an $N$ and an $M$ such that $E_1(R),\dots,E_N(R)$  determine $R$ up to at most $M$ choices, $\dots$ is $R$ determined uniquely?}\footnote{Following McMullen's notations, $E_i(R)$ is the multiplier spectrum of periods $i$ of the rational map $R$.}
	\medskip


	It turns out that $\tau_d$  is {\bf not} always  injective on $\sM_d^{\star}(\C)$.  Silverman showed that if $f$ is a  rigid Latt\`es map defined over a number field $K$ whose class number is larger than 1, then $\tau_d$ is not injective  at $[f]$ \cite[Theorem 6.62]{Silverman2007}. 
	
	Another construction of maps with the same multiplier spectrum was introduced by Pakovich \cite{Pakovich2019}. Let $f$ be a rational map. Following Pakovich \cite{Pakovich2019}, for any decomposition $f=h_1\circ h_2$ into a composition of rational maps, where $h_1$ and $h_2$ have degree at least 2, we say that the rational map $\tilde{f}:=h_2\circ h_1$ is an \emph{elementary transformation} of $f$.  We say that rational maps $f$ and $g$ are \emph{equivalent} if there exists a chain of elementary
	transformations between $f$ and $g$. One can show that if $f$ and $g$ are equivalent then $\tau_d(f)=\tau_d(g)$,  
	see Pakovich \cite[Lemma 2.1]{pakovich2019recomposing}.  
	
	\medskip
	
Even though  $\tau_d$  is not always injective on $\sM_d^{\star}(\C)$ as we have seen, one might hope that $\tau_d$ is  \emph{generically injective}, i.e. $\tau_d$ is injective on a Zariski open subset. 


 Poonen asked whether $\tau_d$ is always generically injective \cite[Question 2.43]{Silverman2012}.  The following is our main theorem. It  gives an affirmative answer to Poonen's question, hence an affirmative answer to McMullen's question for generic parameters. 
	
%
	
\begin{thm}\label{main}
		For every $d\geq 2$,
		 the morphism $$\tau_d:\sM_d(\C)\to \C^{N_1}\times\cdots\times \C^{N_{m_d}}$$ is generically injective. 
	\end{thm}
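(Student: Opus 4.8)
The plan is to argue by contradiction. Consider the Zariski closed set
\[
R \ :=\ \bigl\{\,([f],[g])\in\sM_d(\C)^2 \ :\ \tau_d([f])=\tau_d([g])\,\bigr\},
\]
which contains the diagonal $\Delta$ and, being the relation ``$f$ and $g$ have the same multiplier spectrum'', is an equivalence relation. By construction $\deg(\tau_d)$ equals the generic cardinality of an $R$-equivalence class, so $\tau_d$ is generically injective if and only if every irreducible component of $R$ other than $\Delta$ has image under the first projection $\mathrm{pr}_1$ a \emph{proper} closed subset of $\sM_d(\C)$. Assume for contradiction that some component $Z\neq\Delta$ has $\mathrm{pr}_1(Z)=\sM_d(\C)$. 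Since $Z\not\subseteq\Psi(\FL_d)\times\sM_d(\C)$, the set $Z\cap(\sM_d^*(\C)\times\sM_d(\C))$ is dense and open in $Z$, and there McMullen's Theorem~\ref{McMullen} forces $\mathrm{pr}_1$ to have finite fibres; hence $\dim Z=\dim\sM_d(\C)=2d-2$ and $\mathrm{pr}_1$ is dominant and generically finite. Using the symmetry of $R$ together with the fact that $\tau_d$ determines the Lyapunov exponent $L(f)=\int\log\Vert Df\Vert\,d\mu_f$ --- which attains its minimal value $\tfrac12\log d$ exactly on Latt\`es maps --- one checks that $\mathrm{pr}_2$ is dominant as well, so $Z$ is a generically finite correspondence on $\sM_d(\C)$ preserving $\tau_d$.

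\textbf{Step 2: reduction to hyperbolic parameters.} The hyperbolicity locus is open and dense in $\sM_d(\C)$ in the complex topology, and the finite projections $\mathrm{pr}_1,\mathrm{pr}_2$ are generically open; hence the set of $(f,g)\in Z$ for which both $f$ and $g$ are hyperbolic is Euclidean-dense, and so Zariski-dense, in $Z$. It therefore suffices to prove the rigidity statement: \emph{if $f$ and $g$ are degree $d$ hyperbolic rational maps with the same multiplier spectrum at every period and $f$ is generic --- trivial automorphism group, $2d-2$ distinct critical points with disjoint infinite forward orbits, not Latt\`es, and, crucially, indecomposable --- then $[f]=[g]$ in $\sM_d(\C)$.} Indeed this forces a Zariski-dense subset of $Z$ into $\Delta$, hence $Z=\Delta$ by irreducibility and equality of dimensions, contradicting $Z\neq\Delta$.

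\textbf{Step 3: the dynamical rigidity.} To prove the rigidity statement one matches, for each $n$, the $d^n+1$ periodic points of $f$ and of $g$ together with their multipliers. The thermodynamic formalism enters here: the full multiplier spectrum determines the pressure function of $f$ and hence the Gibbs/equilibrium structure on the hyperbolic repeller $J(f)$, which should force this matching to be canonical and dynamically coherent. Propagating it across all periods and over the correspondence $Z$ and applying the $\lambda$-lemma yields a holomorphic motion of $J(f)$ onto $J(g)$ realising a conjugacy that is \emph{conformal} on $J(f)$, not merely quasiconformal, since multipliers are preserved exactly. As $f$ is hyperbolic, its Fatou set is the union of the basins of finitely many attracting cycles, each immediate basin component carrying a canonical linearising (Koenigs or B\"ottcher) coordinate; combining the Julia-set conjugacy with these coordinates extends it to a global quasiconformal conjugacy $\Phi:\P^1\to\P^1$ between $f$ and $g$ that is conformal on $J(f)$. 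One then runs McMullen's no-invariant-line-field argument, now with the genericity hypotheses --- above all indecomposability --- in force, to conclude that $\Phi$ admits no nontrivial Beltrami deformation supported on the Fatou set, hence is M\"obius, so $[f]=[g]$.

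\textbf{The main obstacle.} Essentially all of the difficulty lies in Step 3, in two places: making the multiplier matching canonical and consistent across all periods and over the whole correspondence, and upgrading the conformal-on-$J(f)$ conjugacy to a M\"obius conjugacy. The elementary-transformation examples recalled above --- for a decomposable $f=h_1\circ h_2$ the twist $h_2\circ h_1$ is isospectral, typically non-conjugate to $f$, and again hyperbolic --- show that indecomposability of $f$ is genuinely necessary, not a convenience; the argument must use it to exclude \emph{every} isospectral companion of a generic hyperbolic $f$, not merely the recomposition ones. I expect this to be the technical heart, possibly needing an additional rigidity input --- for instance an arithmetic one, through equidistribution of periodic points and their multipliers over $\overline{\Q}$ --- beyond McMullen's methods.
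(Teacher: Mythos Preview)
Your proposal has two genuine gaps. First, Step~2 asserts that the hyperbolicity locus is dense in $\sM_d(\C)$; this is the density-of-hyperbolicity (Fatou) conjecture, which is open for rational maps of every degree $d\geq 2$. Ma\~n\'e--Sad--Sullivan gives density of $J$-stable maps, not of hyperbolic ones, so you cannot reduce to a hyperbolic pair $(f,g)$ this way. Second, Step~3 is not an argument but a hope, as you essentially acknowledge. Equality of the unordered multiplier lists at every period gives equality of the pressure function $P(t)=\lim_n n^{-1}\log\sum_i|\lambda_i^{(n)}|^{-t}$, but that is an averaged quantity and does not single out a shift-equivariant bijection of periodic orbits, let alone a conjugacy on $J(f)$. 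Even granting a conformal conjugacy on the Julia sets, the extension across the Fatou set to a M\"obius map is unavailable: McMullen's no-invariant-line-field theorem concerns the measurable dynamics of a \emph{single} map and says nothing about rigidity of a conjugacy between two distinct hyperbolic maps. Your own recomposition examples show that indecomposability must enter essentially, and you provide no mechanism by which it does.

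The paper follows a completely different route and supplies exactly the ``additional rigidity input'' you anticipate at the end. For $d\geq 4$ one assumes $\deg(\tau_d)\geq 2$ and, over a suitable curve $Y_{\underline{n}}\subset\sM_d$ on which $2d-3$ critical points are persistently periodic, builds two non-isotrivial families $f_V,g_V$ with $\tau_d(f_t)=\tau_d(g_t)$ but $[f_t]\neq[g_t]$. An elementary lemma shows the multiplier spectrum detects the hyperbolic-PCF-of-disjoint-type property, so both families carry infinitely many PCF parameters. The key input is then the authors' Dynamical Andr\'e--Oort theorem for curves: it forces $f_t$ and $g_t$ to be \emph{intertwined} (linked by an $(f_t\times g_t)$-invariant curve in $\P^1\times\P^1$) for all but finitely many $t$. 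Finally a theorem of Pakovich says that a Zariski-generic simple map intertwined with another simple map must be conjugate to it, contradicting $[f_t]\neq[g_t]$. Thus the decisive step is algebraic/arithmetic (DAO plus Pakovich), not the thermodynamic or quasiconformal rigidity you propose.
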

As a consequence, the system of functions given by the multiplier spectrum generate  the fraction field of $\sM_d.$  

One can also consider the moduli space of polynomials and the multiplier spectrum morphism on it.  Let $\sP_d$ be the moduli space of degree $d$ polynomials, which  is the quotient space of the space of degree $d$ polynomials modulo the conjugation by affine maps. For every $d\geq 2$ and $n\geq 1$, we let $\tilde{\tau}_{d,n}$ (resp. $\tilde{\tau}_{d}$) be the restriction of $\tau_{d,n}$ (resp. $\tau_{d}$) on $\sP_d$. As the the moduli space of polynomials is a proper Zariski closed subset of $\sM_d$, Theorem \ref{main} does not implies the injectivity of $\tilde{\tau}_{d}$ directly. However, our proof of Theorem \ref{main} still works in the polynomial setting. 
\begin{thm}\label{thmpoly}For every $d\geq 2$,  $\tilde{\tau}_{d}$ is generically injective on $\sP_d$. 
\end{thm}

While we were preparing this article, we knew from a private communication that Huguin has an independent proof of Theorem \ref{thmpoly}, see \cite{huguin2024moduli}.
Indeed Huguin proved the generic injectivity of $\tilde{\tau}_{d,2}$. 
From the communication with Huguin, 
we know that Huguin's method is completely different from ours, which is based on Fujimura's result \cite{Fujimura2007}  and on the computations  by Gorbovickis \cite{Gorbovickis2015}.
\subsection{Ingredients in the proof of Theorem \ref{main}}
To show Theorem \ref{main}, we argue by contradiction. Assume  that $\tau_d$ is not generically injective.  We first construct  two  algebraic families of rational maps $f_t$ and $g_t$, parametrized by the same algebraic curve $V$, such that: (1)  for every $t\in V(\C)$, the images $f_t$ and $g_t$ in $\sM_d$ are different; (2) $\tau_d(f_t)=\tau_d(g_t)$ for every $t\in V(\C)$. We can further ask that these two families satisfy several geometric and dynamical assumptions, c.f. Lemma \ref{curve}. Our key step is  to show that $f_t$ and $g_t$ are  \emph{intertwined} (c.f. Definition \ref{defiintert} ) for all but finitely many $t\in V(\C)$. 
The main ingredient of the proof for this step is the following result in the spirit of the Dynamical Andr\'e-Oort (DAO) conjecture.  The oringinal DAO conjecture was proposed by Baker-DeMarco \cite{baker2013special}. The DAO conjecture is about  the distribution of \emph{ postcritically finite} (PCF) maps in a subvariety of the moduli space. A rational map is called PCF if its critical orbits is a finite set.  The one-dimensional DAO conjecture concerning distribution of PCF maps in an algebraic curve was recently solved by the authors \cite{ji2023dao}.  Instead of considering  one algebraic family of rational maps, the following DAO-type result is about two algebraic family of rational maps. See Definition  \ref{family} for the definition of algebraic families.

\begin{thm}\label{DAOint}
Let $d\geq 2$ and let $f_V,g_V$ be two degree $d$ non-isotrivial algebraic families defined over $\C$, parametrized by the same irreducible algebraic curve $V$, and $f_V,g_V$  are not families of flexible Latt\`es maps. Assume that there are infinitely many $t\in V(\C)$ such that $f_t$ and $g_t$ are both PCF. Then for all but finitely many $t\in V(\C)$, $f_t$ and $g_t$ are intertwined. 
\end{thm}

\begin{rem}
In Theorem \ref{DAOint}, by working harder, we actually can show that for {\bf every} point $t\in  V(\C)$, $f_t$ and $g_t$ are intertwined. As we do not  need this stronger result for the proof of Theorem \ref{main}, we only sketch a proof here. Let $\eta$ be the generic point of $V.$
As $f_\eta$ and $g_\eta$ are intertwined, after replacing $f_V,g_V$ by a suitable iterate, there is an $f_\eta\times g_\eta$-invariant irreducible  subvariety $\Gamma\subseteq \P^1_\eta\times \P^1_{\eta}$ which dominants the two factors. After a base change by a finite morphism $V'\to V$, we may assume that $V$ is smooth and $\Gamma_{\eta}$ is geometrically irreducible. Set $h_{\eta}:=f_{\eta}\times g_{\eta}|_{\Gamma_{\eta}}$. We have $\deg h_{\eta}=d\geq 2.$
For every point $t\in V(\C)$, denote by $\Gamma_t\subseteq (\P^1_\C)^2$ the specialization of $\Gamma_{\eta}$ at the fiber above $t.$ As the numerical class of $\Gamma_t$ is constant in the family, we only need to show that $\Gamma_t$ is irreducible. Every point $t\in V(\C)$ induces an embedding of $\C(V)$ to the non-archimedean field $\C((T))$. This induces an endomorphism $h^{\an}$ on the Berkovich analytification  $\Gamma^{\an}$ of $\Gamma_\eta$ (which depends on the choice of $t$).  Each irreducible component of $\Gamma_t$ defines a type-II point in $\Gamma^{\an}$. These type-II points form a totally invariant finite subset of $h^{\an}$ in  $\Gamma^{\an}.$ It is well-known in one-dimensional non-archimedean dynamics that such a  totally invariant finite subset  must be a single point \cite[Proposition 8.13]{Benedetto2019a}. Hence $\Gamma_t$ is irreducible, which concludes the proof. This proof indeed shows that the intertwined locus in $\sM_d\times \sM_d$ is Zariski closed.
\end{rem}


  Before \cite{ji2023dao}, the one-dimensional DAO conjecture and related results have been studied in \cite{baker2013special}, \cite{ghioca2013preperiodic}, \cite{ghioca2015preperiodic}, \cite{de2015bifurcation}, \cite{ghioca2016unlikely}, \cite{ghioca2016case},  \cite{ghioca2017dynamical}, \cite{favre2018classification}, \cite{ghioca2018dynamical}, \cite{ghioca2018simultaneously}, \cite{favre2022arithmetic}.  In particular, Theorem \ref{DAOint} was proved by Favre-Gauthier \cite{favre2022arithmetic} in the case that $f_V$ and $g_V$ are polynomial families.
 
 \medskip
\par  From the above construction  we can get a Zariski dense set of \emph{simple} (c.f. Definition \ref{simple}) rational maps $[f]$ in $\sM_d(\C)$ 
which is intertwined with a simple rational map $g$ with $[g]\neq [f]$.   We then use a result of Pakovich (c.f. Theorem \ref{intertwined}) to get a contradiction.
\medskip


\medskip

In a private communication, DeMarco and Mavraki told us that when they preparing lectures in Harvard and Toronto in November 2023  \cite{DeMarco2023}, they find a simplification of our proof of  Theorem \ref{main}. They observed that to prove Theorem \ref{main}, one may replace Theorem \ref{DAOint} by a weaker lemma
(c.f. Lemma \ref{lemdmweak}). This lemma can be proved following the same strategy of Theorem \ref{DAOint}, but there are two important steps become easier. See Section  \ref{subsectionsimp} for details.

\subsection{Previous results on the degree of the multiplier spectrum morphism}
In \cite{Gorbovickis2015}, Gorbovickis showed that $\tau_{d,n}$ is generically quasi-finite for $d\geq 2$ and $n\geq 3.$
A recursive formula for the  upper bound of the topological degree of  $\tau_{d,n}$ was obtained by Schmitt in the preprint version \cite{Schmitt2016}.
An explicit upper bound of the topological degree of $\tau_{d,n}$ was obtained in Gotou's recent work \cite{Gotou2023}.

 When $d=2$, Milnor \cite{Milnor1993} showed that  $\tau_{2,1}$ is in fact injective on $\sM_d(\C)$ (see also \cite[Theorem 2.45]{Silverman2012}). In particular Theorem \ref{main} holds when $d=2$.
When $d=3$,  Gotou showed that $\tau_{3,2}$ is generically injective (but not injective) \cite[Theorem 1.2]{Gotou2023}.
This was mentioned in \cite{Hutz2013a} which is the  errata for \cite{Hutz2013}.  
Previously there was no result about the generic injectivity of $\tau_d$ when $d\geq 4$.

One can also consider the moduli space of polynomials and the multiplier spectrum morphism on it. For every $d\geq 2$ and $n\geq 1$, we let $\tilde{\tau}_{d,n}$ be the restriction of $\tau_{d,n}$ on the moduli space of polynomials. In this case $\tilde{\tau}_{d,1}$ is generically quasi-finite (while $\tau_{d,1}$ is never generically quasi-finite, except when $d=2$).  Fujimura showed that $\deg (\tilde{\tau}_{d,1})=(d-2)!$ \cite{Fujimura2007}. The fiber structure of $\tilde{\tau}_{d,1}$ was studied by Sugiyama \cite{Sugiyama2017}, \cite{Sugiyama2020}, \cite{Sugiyama2023}. 

\subsection{Further  results and problems}
\subsubsection{McMullen's conjecture for the hyperbolicity of the moduli space $\sM_f$} 

For a rational map $f\in \Rat_d(\C)$, McMullen and Sulllivan introduced the Teichm\"uller space $\sT_f$ and the moduli space $\sM_f$ \cite{mcmullen1998quasiconformal}.   We refer the readers to \cite{mcmullen1998quasiconformal} and \cite{astorg2017teichmuller} for the definitions of  $\sT_f$ and $\sM_f$.

\par In \cite[Page 473]{McMullen1987}, McMullen conjectured that for every $f\in \Rat_d(\C)$ that are not flexible Latt\`es, bounded holomorphic functions separate points on $\sM_f$.

In a forthcoming paper \cite{ji2023moduli}, we solve McMullen's conjecture  using the multiplier spectrum.  Moreover by applying Theorem \ref{main}, we get a more explicit description of $\sM_f$ when $d\geq 4$. 

\subsubsection{Injective locus of $\tau_d$}

We have seen before that there are two mechanisms that can  produce rational maps with the same multiplier spectrum, one from Latt\`es maps, and another one from equivalent rational maps. Pakovich asked  \cite[Problem 3.1]{Pakovich2019} whether 
these are the only obstructions of the injectivity of $\tau_d$.

In a forthcoming paper, using the tools developed in this paper, we can show that the non-isolated components of the non-injective locus of $\tau_{d}$ come from 
intertwined families.
This motivates us to conjecture that the answer of Pakovich's question is yes.

\begin{con}\label{conequmulti}
Let $f,g$ be rational maps of degree $d\geq 2$ such that the conjugacy classes of $f$ and $g$ are different. Assume that $\tau_d(f)=\tau_d(g)$, then one of the followings holds:
\begin{points}
\item $f$ and $g$ are Latt\`es maps;
\item $f$ is equivalent to $g$.
\end{points}
\end{con}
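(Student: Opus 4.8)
\medskip

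We do not settle Conjecture \ref{conequmulti} here, but we indicate a plausible line of attack: run the proof of Theorem \ref{main} while following a \emph{single} pair $(f,g)$ rather than a generic parameter, supplemented by a description of the fibers of $\tau_d$ over Latt\`es parameters. \textbf{First, reduce to the non-Latt\`es case.} If $f$ lies on a flexible Latt\`es curve, then $\tau_d$ is constant on that curve and its value is rigid: by the description in \cite{milnor2006lattes}, the multipliers of the periodic points of $f$ lie in a finite set of the form $\{\pm\sqrt{d}^{\,n}\}$, which is not the case for non-Latt\`es maps in view of Theorem \ref{McMullen}; one should use this to show that any $g$ with $\tau_d(g)=\tau_d(f)$ is itself Latt\`es, so that (i) holds. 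If $f$ is a rigid Latt\`es map, Theorem \ref{McMullen} only gives that $\tau_d^{-1}(\tau_d(f))$ is finite, and Silverman's class-number example \cite[Theorem 6.62]{Silverman2007} shows it can be larger than $\{[f]\}$; the content here is the rigidity statement that the multiplier spectrum along the one-dimensional arithmetic family of rigid Latt\`es maps never collides with that of a non-Latt\`es map, and granting it every point of the fiber is Latt\`es. We are thus reduced to: $f,g$ not Latt\`es, $[f]\neq[g]$, $\tau_d(f)=\tau_d(g)$, and we must show $f$ is equivalent to $g$.

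\textbf{Second, put $(f,g)$ in a family and apply DAO.} Let $R\subseteq\sM_d(\C)^2$ be the Zariski closed set of pairs with the same multiplier spectrum, with the diagonal and the preimages of the flexible Latt\`es locus removed; by Theorem \ref{McMullen} both projections $R\to\sM_d(\C)$ are quasi-finite. Let $Z$ be the irreducible component of $R$ through $([f],[g])$, and use the normalization of Lemma \ref{curve} to produce a curve $V\to Z$ through a point over $([f],[g])$ together with two algebraic families $f_t,g_t$ satisfying $[f_t]\neq[g_t]$, $\tau_d(f_t)=\tau_d(g_t)$, and the dynamical assumptions of Lemma \ref{curve} on a dense open subset. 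Exactly as in the proof of Theorem \ref{main}, the variant of the Dynamical Andr\'e--Oort conjecture (Theorem \ref{DAO}) then forces $f_t$ and $g_t$ to be intertwined (Definition \ref{defiintert}) for all $t$; in particular $f$ and $g$ are intertwined.

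\textbf{Third, pass from intertwined to equivalent.} On a Zariski dense subset of $Z$ the maps $f_t,g_t$ are simple (Definition \ref{simple}), and Pakovich's classification of intertwined pairs (Theorem \ref{intertwined}) gives $f_t\sim g_t$ there. Since a rational map of degree $d$ has, up to conjugacy, only finitely many decompositions, its equivalence class is finite \cite{pakovich2019recomposing}; the equivalence classes of the $[f_t]$ therefore sweep out a subvariety of $V\times\sM_d(\C)$ finite over $V$, and the graph of $t\mapsto[g_t]$, contained in its closure over a dense set, is contained in that closure for all $t$. A limiting analysis of how the decompositions of $f_t,g_t$ degenerate at the point over $([f],[g])$ should then yield $f\sim g$; non-simple parameters are handled separately, using the structure theory of non-simple maps to reduce to the simple and Latt\`es cases already treated.

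\textbf{Main obstacle.} The crux is the last step: Theorem \ref{intertwined} gives ``equivalent'' only from ``intertwined and simple'', and controlling the degeneration locus — ruling out limits of pairwise-equivalent simple maps that become non-simple without becoming Latt\`es, and showing that the ``equivalence correspondence'' is well-behaved in families — appears to require a finer classification of intertwined rational maps than is currently available. A secondary, more arithmetic difficulty is the rigidity statement for rigid Latt\`es fibers used in the first step, which genuinely does not follow from Theorem \ref{McMullen}.
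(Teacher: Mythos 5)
The statement you are addressing is Conjecture \ref{conequmulti}: the paper states it as an open conjecture and offers no proof, so there is nothing in the paper to compare your argument against. Your text is, by its own admission, not a proof but a programme, and as such it cannot be accepted as establishing the statement. Beyond the obstacles you yourself flag (the intertwined-to-equivalent step at non-simple parameters, and the rigid Latt\`es fibers), there is an earlier and more basic gap in your second step. The strategy of Theorem \ref{main} is not a pointwise argument that you can ``run at a single pair $(f,g)$'': it needs a positive-dimensional family on which Theorem \ref{DAO} applies, and Theorem \ref{DAO} requires infinitely many parameters $t$ at which \emph{both} $f_t$ and $g_t$ are PCF. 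In the paper this hypothesis is manufactured by construction: the first family is forced into a curve $Y_{\underline{n}}$ (Lemma \ref{curve}), Lemma \ref{curve}(3) supplies infinitely many hyperbolic PCF parameters of disjoint type on it, and Lemma \ref{multiplier} transfers PCF-ness to the second family. For an arbitrary pair $([f],[g])$ with equal multiplier spectra, the irreducible component $Z$ of $R$ through $([f],[g])$ need not be positive-dimensional at all (quasi-finiteness of the projections only bounds its dimension above), and even when it is a curve there is no reason it meets the locus where your families acquire infinitely many common PCF parameters; the curves $Y_{\underline{n}}$ are collectively Zariski dense, but a prescribed point of $\sM_d(\C)^2$ need not lie on any of them. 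So the sentence ``use the normalization of Lemma \ref{curve} to produce a curve $V\to Z$ through a point over $([f],[g])$ \ldots satisfying the dynamical assumptions of Lemma \ref{curve}'' asserts exactly what the paper's generic construction cannot give you at a specified point, and without it Theorem \ref{DAO} does not apply and you never even reach the conclusion that $f$ and $g$ are intertwined.

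Two smaller points. Your reduction to the non-Latt\`es case does not follow from Theorem \ref{McMullen}, which says nothing about which multiplier spectra are attained by non-Latt\`es maps; the statement that a map sharing the multiplier spectrum of a Latt\`es map is itself Latt\`es is a separate rigidity input (of the kind proved in \cite{ji2023homoclinic} or \cite{Ji2023}), and should be cited as such rather than attributed to quasi-finiteness. And note that Theorem \ref{intertwined} concludes that $f$ and $g$ are \emph{conjugate} (under the hypotheses $f\in W_d$, $g$ simple), which is how the paper derives a contradiction in the generic setting; the conjecture's content lies precisely at the parameters where those hypotheses fail and where ``equivalent'' rather than ``conjugate'' is the expected conclusion, so your third step is not a degeneration argument from the generic case but the genuinely open classification problem.
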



\subsubsection{Generic injectivity of multiplier spectrum of small periods}
In Theorem \ref{McMullen} and Theorem \ref{main}, the definition of $\tau_d$ requires the use of periodic points of periods not exceeding the number $m_d$, whose precise value is not effectively-known and is probably very large.  It is  interesting to know whether we can get generic injectivity using only periodic points of small periods.    By dimension counting, $\tau_{d,1}$ is never generically injective except when $d=2$.  Again by  dimension counting, it is very likely that  $\tau_{d,2}$ is generically injective.
\begin{con}
	For every $d\geq 2$,
the morphism $$\tau_{d,2}:\sM_d(\C)\to \C^{N_1}\times \C^{N_2}$$ is generically injective. 
\end{con}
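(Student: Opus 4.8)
The natural plan is to bootstrap from the Main Theorem (Theorem \ref{main}). For each $n$ let $R_n\subseteq \sM_d(\C)^2$ be as in the introduction, so that $R_1\supseteq R_2\supseteq\cdots\supseteq R_{m_d}$ and, by Theorem \ref{main}, $R_{m_d}$ coincides with the diagonal $\Delta$ over a nonempty Zariski open subset of $\sM_d(\C)$. Since $R_2\supseteq R_{m_d}\supseteq \Delta$, the conjecture (once $\tau_{d,2}$ is known to be generically quasi-finite, see below) is equivalent to the assertion that $R_2=\Delta$ over a nonempty Zariski open set, and, in view of Theorem \ref{main}, also to $R_2=R_{m_d}$ over a nonempty Zariski open set. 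In words: \emph{generically, the multipliers of periods $1$ and $2$ already determine the entire multiplier spectrum}. I would attack this last statement directly.

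A preliminary step is to establish that $\tau_{d,2}$ is generically quasi-finite, so that $\deg(\tau_{d,2})$ is even defined. This is Gorbovickis's theorem only for $\tau_{d,n}$ with $n\ge 3$, and is classical for $d=2$ (Milnor) and known for $d=3$ (Gotou); for $n=2$ and $d\ge 4$ I would prove it first, e.g.\ by exhibiting one explicit non-Latt\`es parameter at which the differential $d\tau_{d,2}$ has maximal rank $2d-2$ — computed from the variational formulas for period-$2$ multipliers in the spirit of Gorbovickis — and then invoking generic smoothness. This does \emph{not} follow from McMullen's Theorem \ref{McMullen}: $\tau_{d,2}$ records strictly less data than $\tau_d$, so a priori it could have positive-dimensional fibers where $\tau_d$ has finite ones.

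Granting this, I would argue by contradiction along the lines of the proof of Theorem \ref{main}. If the conjecture fails there is an irreducible component $Z\neq \Delta$ of $R_2$ dominating $\sM_d(\C)$ under the first projection; slicing, one produces an irreducible affine curve $V$ and two algebraic families $f_t,g_t$ ($t\in V$) of degree-$d$ rational maps with $[f_t]\neq[g_t]$ for all $t$ and $\tau_{d,2}(f_t)=\tau_{d,2}(g_t)$ for all $t$, normalized to satisfy the same geometric and dynamical hypotheses as in Lemma \ref{curve} (in particular simple, non-Latt\`es, non-exceptional). The crux is then to upgrade the coincidence of multipliers of periods $1$ and $2$ along $V$ to the conclusion that $f_t$ and $g_t$ are intertwined (Definition \ref{defiintert}) for every $t\in V$. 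In the proof of Theorem \ref{main} the input to the Dynamical Andr\'e–Oort mechanism (Theorem \ref{DAO}) is coincidence of the \emph{full} multiplier spectrum; here one has only periods $1$ and $2$, so one needs a strengthened rigidity statement: that coincidence of period-$1$ and period-$2$ multipliers along a one-parameter family, together with generic non-degeneracy, already forces intertwining fiberwise. I would try to show that the period-$1,2$ data already determines enough of the bifurcation current (equivalently, of the measure of maximal entropy) along $V$ to feed the PCF-distribution input exactly as before; once intertwining is in hand one obtains a Zariski dense set of simple $[f]$ intertwined with some simple $[g]\neq[f]$, and Pakovich's theorem (Theorem \ref{intertwined}) produces the contradiction just as for the Main Theorem.

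I expect the rigidity upgrade in the previous step to be the real obstacle. Period-$2$ data is far weaker than the full multiplier spectrum, and McMullen's rigidity argument genuinely exploits arbitrarily large periods (through Lyapunov exponents and the ergodic theory of $f$), so replacing ``full spectrum'' by ``periods $1$ and $2$'' is not a formal matter. A realistic fallback, if no uniform rigidity statement is available, is to first obtain an explicit small bound on $m_d$ and then prove that the algebraic relations among multipliers of periods $\le m_d$ are generically forced by those of periods $1$ and $2$; or, short of a uniform result, to verify the conjecture for small $d$ by an explicit analysis of $\tau_{d,2}$, extending Gotou's treatment of $d=3$.
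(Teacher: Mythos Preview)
The statement you are attempting to prove is stated in the paper as a \emph{conjecture}; the paper gives no proof of it, and it remains open. So there is no ``paper's own proof'' to compare against.

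Your proposal is, and you yourself recognize this, not a proof but a research outline, and the gap you flag is exactly the one that makes this a conjecture rather than a corollary of Theorem~\ref{main}. Concretely: the mechanism in the proof of Theorem~\ref{main} that produces infinitely many parameters $t$ where \emph{both} $f_t$ and $g_t$ are PCF, which is the sole input to Theorem~\ref{DAO}, is Lemma~\ref{multiplier} (or alternatively \cite[Theorem~1.12]{Ji2023}). That lemma converts ``$f_t$ is hyperbolic PCF of disjoint type $\underline{n}$'' plus ``$f_t,g_t$ have the same multiplier spectrum'' into ``$g_t$ is PCF'', and its proof genuinely inspects multipliers of period up to the l.c.m.\ of the $n_j$. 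If you only know $\tau_{d,2}(f_t)=\tau_{d,2}(g_t)$, you have no control over superattracting cycles of $g_t$ of period $\geq 3$, so you cannot conclude that $g_t$ is PCF, and the DAO step has no input. Your proposed workaround, that period-$1,2$ data ``already determines enough of the bifurcation current'' to feed into the PCF-distribution machinery, is not established anywhere and is in effect a restatement of the conjecture: it amounts to saying that generically $\tau_{d,2}$ controls $\tau_d$, which is precisely what is to be proved.

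A secondary issue: your preliminary step, generic quasi-finiteness of $\tau_{d,2}$ for $d\geq 4$, is also not in the paper and would itself require work (the cited Gorbovickis result is for $n\geq 3$). Your suggestion to check maximal rank of $d\tau_{d,2}$ at a single explicit point is reasonable in principle, but you would need to actually carry it out; it is not a formality.
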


\subsubsection{Generic injectivity of the length spectrum}
Replacing  the multipliers by their norm in the definition of multiplier spectrum, one gets the definition of the \emph{length spectrum}.
More precisely, for every $f\in \text{Rat}_d(\C)$ and $n\geq 1$, we denote by $L_n(f)\in \R_{\geq 0}^{N_n}$ the elements corresponding to the values of the elementary symmetric polynomials at the point $(|\la_1|,\dots, |\la_{N_n}|)\in\R_{\geq 0}^{N_n}$ where 
$\la_i, i=1,\dots, N_n$ are the multipliers of all $f^n$-fixed points. The length spectrum of $f$ is defined by the sequence $L_n(f), n\geq 1.$ A priori,  the length spectrum contains less information than the multiplier spectrum.  In \cite{ji2023homoclinic}, a parallel result of Theorem \ref{McMullen} has been shown, that is outside the flexible Latt\`es family,  the length spectrum determines the conjugacy class of rational maps up to finitely many choices \cite[Theorem 1.5]{ji2023homoclinic}. We believe the following parallel result of Theorem \ref{main} for length spectrum is true. Note that $\sM_d$ is defined over $\Q$ (hence over $\R$).

We propose the following conjecture as a parallel result of our Theorem \ref{main}.

\begin{con}\label{1.8}
	For every $d\geq 2$, there is a Zariski closed proper subset $E_d\subset \sM_d$ defined over $\R$, such that for every $[f]\notin E_d$, if there is $g\in\Rat_d(\C)$ such that $f$ and $g$ have the  same length spectrum, then  $g$ or $\overline{g}$ (the complex conjugation of $g$)  is conjugated to $f$.
\end{con}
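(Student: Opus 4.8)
The plan is to adapt the strategy of Theorem \ref{main} to the length spectrum, using \cite[Theorem 1.5]{ji2023homoclinic} (the finiteness statement for the length spectrum) in place of McMullen's Theorem \ref{McMullen}. First I would fix a degree $d\geq 2$ and let $\ell_d$ be the ``length spectrum morphism'', the analog of $\tau_d$: there is an integer $m$ and a map $[f]\mapsto (L_1(f),\dots,L_m(f))$ which, by \cite[Theorem 1.5]{ji2023homoclinic}, has finite fibers away from $\Psi(\FL_d(\C))$. Unlike $\tau_d$, this map is only real-analytic (the norm is not holomorphic), so I would work with the real algebraic variety $\sM_d(\R)$-points and their complex conjugation action. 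Arguing by contradiction, suppose there is no such proper Zariski closed $E_d$; then on a Zariski dense set of $[f]$ there is $g\in\Rat_d(\C)$ with the same length spectrum as $f$, and with neither $g$ nor $\overline{g}$ conjugate to $f$.

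The key point is that having the same length spectrum forces, after passing to a subsequence/normalization, the same multiplier spectrum up to complex conjugation of individual multipliers. More precisely, if $f$ and $g$ have the same length spectrum, then for each $n$ and each $f^n$-cycle the multiplier $\la$ of $f$ has $|\la|$ equal to $|\mu|$ for a matching $g^n$-multiplier $\mu$. The hard part will be to upgrade this equality of absolute values to an equality of multiplier spectra of $f$ and one of $g,\overline{g}$ on a Zariski dense set: a priori the matching could mix multipliers of $g$ with those of $\overline{g}$ in a cycle-dependent way. To handle this I would use a rigidity/analyticity argument: on an algebraic curve $V$ in the bad locus one gets families $f_t, g_t$ with $|L_n(f_t)|=|L_n(g_t)|$ identically in $t$; since these are real-analytic identities between holomorphic (resp. antiholomorphic) data, I would show the matching is locally constant and hence, after restricting to a smaller curve and replacing $g$ by $\overline{g}$ if necessary, deduce $\tau_d(f_t)=\tau_d(g_t)$ on a Zariski dense subset.

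Once I have $\tau_d(f_t)=\tau_d(g_t)$ with $[g_t]\neq [f_t]$ on a Zariski dense set (after the conjugation adjustment), I would invoke Theorem \ref{main}: $\tau_d$ is generically injective, so the locus where $\tau_d(f)=\tau_d(g)$ for some $[g]\neq[f]$ is contained in a proper Zariski closed subset $E_d'$ of $\sM_d$; moreover one checks $E_d'$ is defined over $\R$ since $\tau_d$ and the incidence correspondence $R_{m_d}$ are defined over $\Q$. Taking $E_d$ to be $E_d'$ together with $\Psi(\FL_d(\C))$ (also defined over $\Q$) then gives the contradiction: for $[f]\notin E_d$ with the same length spectrum as some $g$, the preceding analysis would place $[f]$ in $E_d'$. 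Finally I would note that the Latt\`es maps have to be excluded (they are in $E_d$), consistent with the statement, since for them even the multiplier spectrum fails to be injective (Silverman's class-number examples). The main obstacle I anticipate is precisely the middle step — controlling the combinatorics of the absolute-value matching across all periods $n$ simultaneously, which requires an argument showing the matching cannot genuinely interleave $g$ and $\overline{g}$ on a positive-dimensional family without forcing one of them to agree with $f$ outright; an analyticity plus Noetherianity argument, analogous to the definition of $m_d$, should make this precise.
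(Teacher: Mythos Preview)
The statement you are attempting to prove is presented in the paper as an open \emph{conjecture} (in Section~1.5.4, ``Generic injectivity of the length spectrum''), not as a theorem; the authors explicitly propose it as a parallel of Theorem~\ref{main} and offer no proof. There is therefore no paper argument to compare your proposal against.

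As for the proposal on its own merits, the step you yourself flag as ``the hard part''---upgrading equality of length spectra to equality of multiplier spectra of $f$ with one of $g,\overline{g}$---is a genuine gap, and it is exactly where the difficulty of the conjecture resides. The condition $|\lambda|=|\mu|$ does not force $\lambda\in\{\mu,\bar\mu\}$; any $\mu$ on the circle of radius $|\lambda|$ is permitted. Even along a holomorphic family, if $\lambda(t)$ and $\mu(t)$ are holomorphic with $|\lambda(t)|\equiv|\mu(t)|$ and nonvanishing, the open mapping theorem only gives $\lambda=c\,\mu$ for some unimodular constant $c$, with no reason for $c=1$ and no link to $\bar\mu$. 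Your ``analyticity plus Noetherianity'' sketch provides no mechanism to eliminate such cycle-by-cycle phase rotations, nor to synchronize them across all periods into a single global operation $g\mapsto\overline{g}$. Without that, the reduction to Theorem~\ref{main} does not go through, and the outline does not constitute a proof of the conjecture. (There is also a smaller issue: the length spectrum map is real-analytic but not algebraic, so the Noetherian/\'etale machinery you invoke for $\tau_d$ does not transfer directly to an ``$\ell_d$''.)
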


More precisely, we propose the following description for rational maps with the same length spectrum.
\begin{con}\label{1.9}
Let $f,g$ be non-Latt\`es rational maps of degree $d\geq 2$. If $f$ and $g$ has the same length spectrum, then $\tau_d(f)$ equals to either $\tau_d(g)$ or $\tau_d(\overline{g})$.
\end{con}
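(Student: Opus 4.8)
\medskip

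I would try to prove this by combining the reduction step behind Theorem \ref{main} with an analytic rigidity argument; notably, the algebraic core of the proof of Theorem \ref{main}, namely Theorem \ref{DAO}, does not seem directly applicable here, and this is really the crux of the difficulty. First note that complex conjugation preserves multiplier moduli, so $L_n(\overline g)=L_n(g)$ for every $n\ge 1$; this explains why the conclusion must allow the alternative $\tau_d(\overline g)$ and forces any proof to track the anti-holomorphic involution $\iota\colon[h]\mapsto[\overline h]$ of $\sM_d(\C)$. By the length-spectrum analogue of McMullen's theorem (\cite[Theorem 1.5]{ji2023homoclinic}), the length spectrum is a quasi-finite invariant away from the flexible Latt\`es locus, so it has a well-defined generic degree, and a hypothetical failure of the conjecture can be propagated into a family just as in the proof of Lemma \ref{curve}: one obtains algebraic families $f_t,g_t\in\Rat_d(\C)$ over an irreducible curve $V$ with $L_n(f_t)=L_n(g_t)$ for all $n$ and all $t$, with $\tau_d(f_t)\notin\{\tau_d(g_t),\tau_d(\overline{g_t})\}$ for every $t$, and with the genericity conclusions of Lemma \ref{curve}.

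The heart of the matter is then to show that $\tau_d(f_t)\in\{\tau_d(g_t),\tau_d(\overline{g_t})\}$ after all, which is a contradiction. For a generic parameter, match the repelling cycles of $f_t$ and of $g_t$ of period at most $m_d$ by equality of multiplier moduli; the matching is locally constant in $t$, so after a finite base change it is globally constant. Along a matched pair, the multiplier $\lambda(t)$ of $f_t$ is holomorphic, while the multiplier $\mu(t)$ of $g_t$ is a priori only real-analytic; since $|\lambda(t)|=|\mu(t)|$, the function $\log\lambda(t)-\log\mu(t)$ has vanishing real part, and the rigidity one needs is that this forces $\mu(t)$ to be either holomorphic, with $\lambda(t)/\mu(t)$ a unimodular constant, or anti-holomorphic, with $\lambda(t)/\overline{\mu(t)}$ a unimodular constant. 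Granting this for all matched cycles --- which should follow by refining the methods of \cite{ji2023homoclinic}, using that the multipliers of periods $\le m_d$ determine the conjugacy class up to finitely many choices and satisfy the rational (Lefschetz) fixed point index identity $\sum_{x\in\Fix(R^n)}(1-\lambda^{(n)}_x)^{-1}=1$ in every period --- one is reduced, after possibly replacing $g_t$ by $\overline{g_t}$ uniformly, to the case where $f_t$ and $g_t$ have multipliers differing cycle by cycle by unimodular constants; imposing the index identities for $f_t$, which already hold for $g_t$, then forces all these constants to be $1$ because the family is non-isotrivial, whence $\tau_d(f_t)=\tau_d(g_t)$. We note that this statement together with Theorem \ref{main} would immediately yield the preceding conjecture on conjugacy classes away from a proper closed set.

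The principal obstacle is the non-holomorphic nature of the length spectrum. The graph of $\iota$ is a totally real submanifold of $\sM_d(\C)^2$ whose Zariski closure is the whole product, so ``$f$ and $g$ have the same length spectrum'' is not a Zariski-closed condition and the Dynamical Andr\'e-Oort machinery of Theorem \ref{DAO} does not apply to it directly; the argument must be analytic and then globalized, and I expect the genuinely hard point to be the rigidity step above --- showing that a real-analytic family of rational maps whose multiplier moduli vary pluriharmonically is, locally and up to conjugation, holomorphic or anti-holomorphic --- together with the monodromy bookkeeping needed to make the choice between ``holomorphic'' and ``anti-holomorphic'' consistent along $V$. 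A secondary difficulty is that the conjecture is pointwise, for all non-Latt\`es $f$ and $g$ rather than merely generic ones: parameters with strictly preperiodic critical points or with parabolic cycles, where the matching of cycles or the index identities degenerate, must be treated by hand, and the cleanest endgame would use the description of the fibers of $\tau_d$ predicted by Conjecture \ref{conequmulti}.
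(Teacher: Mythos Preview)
The statement you are addressing is a \emph{conjecture} in the paper, not a theorem; the authors state it without proof as an open problem in the subsection on the length spectrum. There is therefore no ``paper's own proof'' to compare your proposal against, and what you have written is a research outline toward an open question rather than an alternative to an existing argument.

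As an outline it identifies the right difficulties, but it does not overcome them. The most serious gap is already the second paragraph's first move: you assert that a hypothetical failure ``can be propagated into a family just as in the proof of Lemma \ref{curve},'' but the locus $\{([f],[g]):L_n(f)=L_n(g)\text{ for all }n\}$ is a real-analytic, not complex-analytic, subset of $\sM_d(\C)^2$, and Lemma \ref{curve} together with the fiber-product construction in the proof of Theorem \ref{main} works precisely because the relevant locus is Zariski closed. You acknowledge this later (``not a Zariski-closed condition''), but by then you have already assumed that $f_t$ and $g_t$ are both \emph{holomorphic} algebraic families over the same curve $V$ with $L_n(f_t)=L_n(g_t)$ for all $t$; there is no mechanism given for producing such families, and in fact if both vary holomorphically and non-isotrivially then the equality of length spectra along a real-analytic arc is a highly overdetermined condition whose existence is essentially what you are trying to prove or disprove. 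The subsequent ``rigidity step'' (that $|\lambda(t)|=|\mu(t)|$ with $\lambda$ holomorphic forces $\mu$ to be holomorphic or anti-holomorphic) is simply false without further hypotheses on $\mu$: for a single cycle, $\mu(t)=e^{i\theta(t)}\lambda(t)$ with any real-valued real-analytic $\theta$ satisfies it. You implicitly hope the index identities across many periods pin $\theta$ down, but no argument is given, and the final claim that the unimodular constants must all equal $1$ ``because the family is non-isotrivial'' is unsupported. Finally, as you note, the conjecture is pointwise rather than generic, so even a successful generic argument would leave the stated conjecture open.
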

Theorem \ref{main} and Conjecture \ref{1.9} implies 
Conjecture \ref{1.8} directly.


\subsubsection{Multiplier spectrum for endomorphisms on $\P^N$} 
For holomorphic endomorphisms on $\P^N$, $N\geq 2$, one can also construct the corresponding moduli space $\sM_d^N(\C)$ \cite{Silverman2012}, and the multiplier spectrum morphisms exist on  $\sM_d^N(\C)$ \cite{Silverman2012}. Unlike in dimension one, essentially nothing is known about the multiplier spectrum morphisms  when $N\geq 2$. It is of great interest to extend McMullen's quasi-finiteness theorem  (Theorem \ref{McMullen}) and the generic injectivity theorem (Theorem \ref{main}) to higher dimension.

%

\subsection*{Acknowledgement}
We thank Rin Gotou for sharing us his work on the generic injectivity of $\tau_{3,2}$ and let us know the result of Schmitt \cite{Schmitt2016}. We thank Fedor Pakovich, Valentin Huguin, Gabriel Vigny and Joseph Silverman for their comments and suggestions. We thanks Valentin Huguin for telling us his forthcoming proof for the generic injectivity of $\tilde{\tau}_{d,2}$. We thank Laura DeMarco and Niki Myrto Mavraki for sharing us their simplification of our proof of Theorem \ref{main}.

The first-named author would like to thank Beijing International Center for Mathematical Research in Peking University for the invitation.  Zhuchao Ji is supported
by ZPNSF grant (No.XHD24A0201) and NSFC Grant (No.12401106).  Junyi Xie is supported by NSFC Grant (No.12271007).
		
\section{Hyperbolic PCF maps of disjoint type}\label{curves}
\begin{defi}
A PCF map $f\in \Rat_d(\C)$ is called \emph{hyperbolic of disjoint type} if $f$ has  $2d-2$ distinct super-attracting cycles.  Let $\underline{n}:=\left\{n_1,\dots,n_{2d-2}\right\}\in (\N^*)^{2d-2}$, where $\N^*$ is the set of positive integers. A hyperbolic PCF map of disjoint type $f\in \Rat_{d}(\C)$ is called of type $\underline{n}$ if $f$ has $2d-2$ distinct super-attracting cycles of respective exact
periods $n_1,\dots,n_{2d-2}$. 
\end{defi}

 For every $\underline{n}:=\left\{n_1,\dots,n_{2d-2}\right\}\in (\N^*)^{2d-2}$, we let $X_{\underline{n}}\subset \sM_d(\C)$  be the subset of all conjugacy classes  $[f]$ such that $f$ is a hyperbolic PCF map of disjoint type $\underline{n}$.   The following  theorem was proved by Gauthier-Okuyama-Vigny in \cite[Theorem 6.6]{gauthier2019hyperbolic}.
\begin{thm}\label{GOV}

For every sequence   $\underline{n}(k)=(n_1(k),\dots, n_{2d-2}(k))\in (\N^*)^{2d-2}$
satisfying $\min_{1\leq j\leq 2d-2} n_j(k)\to +\infty$ when $k\to +\infty$,   we have that  $\cup_{k}X_{\underline{n}(k)}$ is Zariski dense in $\sM_d(\C)$. 
\end{thm}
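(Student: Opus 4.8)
The plan is to prove the stronger \emph{equidistribution} statement that, after attaching natural multiplicities and normalising, the conjugacy classes lying in $X_{\underline{n}(k)}$ become equidistributed as $k\to\infty$ with respect to a fixed nonzero positive measure $\mu$ on $\sM_d(\C)$ which charges no proper algebraic subset. This suffices: $\supp\mu$ is contained in the (Euclidean, hence Zariski) closure of $\bigcup_k X_{\underline n(k)}$, and since $\mu\neq0$ and $\mu$ does not charge proper algebraic subsets, $\supp\mu$ cannot lie in any proper algebraic subset of $\sM_d(\C)$; therefore neither can $\bigcup_k X_{\underline n(k)}$, i.e.\ it is Zariski dense.

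To build $\mu$, I would pass to the finite cover $\pi\colon\widehat{\sM}_d\to\sM_d$ on which the $2d-2$ critical points are marked. For each $j$ there is a continuous plurisubharmonic function $g_j$ on $\widehat{\sM}_d$ (the Green function of the $j$-th marked critical point) with $T_j:=dd^cg_j\geq0$ the corresponding partial bifurcation current; up to a positive constant, $T_{\bif}=\sum_{j=1}^{2d-2}T_j$. Writing $\mathrm{Per}_j(n)\subset\widehat{\sM}_d$ for the hypersurface where the $j$-th critical point is periodic of period dividing $n$, a local potential of the suitably normalised current $d^{-n}[\mathrm{Per}_j(n)]$ is a renormalised logarithm of $F^n(\hat c_j)\wedge\hat c_j$ in homogeneous coordinates, and it converges to $g_j$; hence $d^{-n}[\mathrm{Per}_j(n)]\to T_j$. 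This is the Bassanelli--Berteloot equidistribution of centres of hyperbolic components, in marked form. The first main step is the simultaneous refinement: for any sequences $n_j(k)\to+\infty$ one has $\big(\prod_j d^{\,n_j(k)}\big)^{-1}\big[\bigcap_j\mathrm{Per}_j(n_j(k))\big]\to T_1\wedge\cdots\wedge T_{2d-2}=:\widehat\mu$, the Bedford--Taylor wedge product of the locally bounded potentials $g_j$. This requires ruling out loss of mass at infinity after compactifying $\widehat{\sM}_d$ and invoking the continuity properties of Bedford--Taylor products under the relevant convergences.

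Next I would identify the limit with $X_{\underline n(k)}$. A point of $\bigcap_j\mathrm{Per}_j(n_j(k))$ lies over $X_{\underline n(k)}$ unless some critical point has period a proper divisor of $n_j(k)$, or two marked critical points share a periodic cycle; the latter locus has codimension $2$ (forcing two critical points into one cycle) while the shared cycle can be located in only polynomially-in-$n$ many ways, so after normalisation this exceptional part carries mass $O(n^2 d^{-n})\to0$, and likewise for the proper-divisor part. Hence $\nu_k^{\mathrm{good}}:=\nu_k-\nu_k^{\mathrm{exc}}\to\widehat\mu$ where $\supp\nu_k^{\mathrm{good}}$ is exactly the preimage of $X_{\underline n(k)}$, and $\mu:=\pi_*\widehat\mu$ is the desired measure on $\sM_d(\C)$. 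It charges no proper algebraic subset: $\widehat\mu=(dd^cg_1)\wedge\cdots\wedge(dd^cg_{2d-2})$ with each $g_j$ continuous, hence puts no mass on pluripolar sets (Bedford--Taylor), in particular none on any proper subvariety, and $\mu$ inherits this. Finally $\mu\neq0$: by symmetry of the marking all $T_j$ represent one cohomology class $\alpha$, so $\|\widehat\mu\|=\int_{\widehat{\sM}_d}\alpha^{2d-2}$ is a positive multiple of $\int_{\sM_d}T_{\bif}^{\,2d-2}=\|\mu_{\bif}\|>0$, the bifurcation measure being non-trivial.

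Assembling the pieces gives the equidistribution of $X_{\underline n(k)}$ towards the nonzero measure $\mu$ with $\mu(Z)=0$ for every proper Zariski closed $Z$, which by the first paragraph yields the Zariski density of $\bigcup_k X_{\underline n(k)}$. The principal obstacle is the second step — the \emph{simultaneous} equidistribution $\big(\prod_j d^{\,n_j(k)}\big)^{-1}[\bigcap_j\mathrm{Per}_j(n_j(k))]\to T_1\wedge\cdots\wedge T_{2d-2}$ holding uniformly over all admissible sequences, together with the control of mass at infinity on the compactification; the remaining ingredients (negligibility of the exceptional loci, non-vanishing of $\widehat\mu$, and the absolute-continuity properties of Bedford--Taylor products) are comparatively routine.
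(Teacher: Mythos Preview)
The paper gives no proof of this statement: Theorem~\ref{GOV} is simply quoted as \cite[Theorem F]{gauthier2019hyperbolic} and used as a black box. Your proposal is, in outline, a faithful sketch of the Gauthier--Okuyama--Vigny argument itself: pass to the critically marked cover, prove simultaneous equidistribution of the normalised intersections $\bigcap_j\mathrm{Per}_j(n_j(k))$ towards the wedge product $T_1\wedge\cdots\wedge T_{2d-2}$ (a constant multiple of the lift of $\mu_{\bif}$), control mass at the boundary of a suitable compactification, and discard the shared-cycle and proper-divisor loci by a counting estimate. You have also correctly located the main technical burden in the simultaneous-equidistribution step with no escape of mass; the remaining ingredients (Bedford--Taylor non-charging of pluripolar sets, $\mu_{\bif}\neq0$) are standard.

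Two small remarks on precision. First, the exceptional-locus estimate is a bit more delicate than ``codimension $2$'': one must bound the number of irreducible components (or the degree) of the locus where two marked critical points lie on a common cycle of bounded length, not just its dimension, in order to see that its contribution is $o(1)$ after normalisation; your $O(n^2 d^{-n})$ is the right shape but deserves a line of justification. Second, the cohomological identity $\|\widehat\mu\|=\int\alpha^{2d-2}$ presupposes that the $T_j$ extend with the right mass across the boundary of the compactification; this is part of the same mass-control package you flagged as the principal obstacle, so it is not an independent input.
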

\begin{defi}\label{family}
Let $V$ be a quasi-projective variety over $\C$.  An \emph{algebraic family} on $V$ is an endomorphism $f_{V}$ on $ V\times \P^1$ defined over $\C$ of the following form.
\begin{align*}
	f_V:V\times \P^1&\to V \times \P^1 \\
	(t,z)&\mapsto (t,f_t(z)).
\end{align*} 
We say that $f_V$ has degree $d$ if for every $t\in V(\C)$, we have $\deg f_t=d.$ For a degree $d$ algebraic family $f_{V}$ on $V$, let $\Psi_{V}: V\to \sM_d(\C)$ be the  morphism sending $t\in V(\C)$ to the class of $f_t$ in $\sM_d(\C).$	We say that $f_{V}$ is \emph{isotrivial} if $\Psi_{V}: V
\to \sM_d(\C)$ is locally constant. 
\end{defi}

 Let $V$ be an irreducible quasi-projective curve over $\C$.   Let $\pi_1:V\times\P^1\to V$ and $\pi_2:V\times\P^1\to \P^1$ be the canonical projections. Let $\omega_{\P^1}$ be the Fubini-Study form on $\P^1(\C)$. Let   $\omega_2:=\pi_2^\ast (\omega_{\P^1})$.  The {\em relative Green current} of $f$ is defined by 
\begin{equation*}
	T_f:=\lim_{n\to+\infty} d^{-n} (f^n)^\ast (\omega_2)=\omega_2+dd^c g,
\end{equation*}
where $g$ is a H\"older continuous quasi-p.s.h. function \cite[Lemma 1.19]{dinh2010dynamics}.
\par For every $t_0\in V(\C)$, we have $T_f\wedge [t=t_0]=\mu_{f_{t_0}}$, where $\mu_{f_{t_0}}$ is the {\em maximal entropy measure} of $f_{t_0}$.
\par A {\em marked point} $a$ is a morphism $a:V\to \P^1$.  The {\em bifurcation measure} of the pair $(f,a)$ is defined by
\begin{equation*}
	\mu_{f,a}:=(\pi_1)_\ast (T_f\wedge [\Gamma_a])=a^*T_f,
\end{equation*}
where $\Gamma_a$ is the graph of $a$, and $[\Gamma_a]$ is the current of integration on $\Gamma_a(\C)$.

\medskip
The following lemma is  a combination of \cite[Proposition 2.4]{dujardin2008distribution} and \cite[Theorem 1.1]{demarco2016bifurcations}.
\begin{lem}\label{bifur}
Let  $f_V:V\times \P^1\to V\times \P^1$  be a degree $d$  non-isotrivial algebraic family, and let $a:V\to \P^1$ be a marked point. Assume that $a$ is not persistently preperiodic,  then there exist infinitely many $t\in V(\C)$ such that $a(t)$ is periodic for $f_t$.
\end{lem}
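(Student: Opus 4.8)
\emph{Overview of the plan.} I would run the standard activity/bifurcation dichotomy for the marked point $a$, using the two cited results as the inputs: \cite[Theorem 1.1]{demarco2016bifurcations} to guarantee that $a$ is active somewhere, and \cite[Proposition 2.4]{dujardin2008distribution} (a Montel-type argument) to produce periodic parameters near every active parameter.

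\emph{Reduction to $\dim V=1$.} Since $a$ is not persistently preperiodic, each of the countably many loci $Z_{m,n}:=\{t\in V:\ f_t^{\,m}(a(t))=f_t^{\,n}(a(t))\}$, $m>n\geq 0$, is a proper Zariski closed subset of $V$; and since $f_V$ is non-isotrivial, $\Psi_V$ is non-constant. Hence a sufficiently general irreducible curve $C\subset V$ is contained in no $Z_{m,n}$ and has $\Psi_V|_C$ non-constant. Replacing $V$ by a smooth model of such a $C$, we may assume $V$ is a smooth affine curve, $f_V$ is non-isotrivial, and $a$ is not persistently preperiodic. (Alternatively one keeps $\dim V$ arbitrary and argues with closed positive currents, using the higher-dimensional forms of the cited statements.)

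\emph{The bifurcation measure.} Fixing homogeneous lifts, let $G_{f_t}\colon\P^1(\C)\to\R_{\geq0}$ be the dynamical Green function of $f_t$; then $g(t):=G_{f_t}(a(t))$ is continuous on $V$, and $T_{f_V,a}:=dd^c g$ is a positive measure whose support is exactly the activity locus $\mathrm{Act}(a)$, i.e. the set of $t_0\in V$ at which the sequence $t\mapsto f_t^{\,n}(a(t))$, $n\geq1$, fails to be normal near $t_0$. Because $dd^c g\geq 0$ with $g$ continuous, $g$ is subharmonic and $T_{f_V,a}$ carries no atoms; consequently, if $T_{f_V,a}\neq0$ then $\mathrm{Act}(a)$ is a non-empty perfect set, hence infinite. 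Now \cite[Theorem 1.1]{demarco2016bifurcations} (equivalently: for a non-isotrivial family, a marked point that is passive on all of $V$, i.e. with $T_{f_V,a}=0$, is persistently preperiodic — here one also invokes the classification of passive marked points from \cite{dujardin2008distribution}) shows that our hypotheses force $T_{f_V,a}\neq0$, so $\mathrm{Act}(a)$ is infinite. On the other hand, \cite[Proposition 2.4]{dujardin2008distribution} shows that every $t_0\in\mathrm{Act}(a)$ lies in the closure of $\{t\in V:\ a(t)\ \text{is a (repelling) periodic point of}\ f_t\}$. Since this set accumulates at each of the infinitely many points of $\mathrm{Act}(a)$, it is infinite, and via the inclusion $C\subset V$ this gives the lemma.

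\emph{Main obstacle.} The mathematical content sits in the two cited theorems; the real work is aligning hypotheses: checking that the paper's notion of non-isotriviality of $f_V$ coincides with ``$f$ not conjugate to a constant over $\overline{\C(V)}$'' (true, since constancy of $\Psi_V$ is detected after any base change), that ``$a$ not persistently preperiodic'' is the function-field preperiodicity appearing in \cite{demarco2016bifurcations}, and that \cite[Theorem 1.1]{demarco2016bifurcations} is available over the characteristic-zero function field $\C(V)$ rather than only over $\overline{\Q}$ (standard; if one prefers, specialize the family). The reduction to a curve and the identifications $\mathrm{supp}(T_{f_V,a})=\mathrm{Act}(a)$ and ``continuous potential $\Rightarrow$ no atoms'' are routine.
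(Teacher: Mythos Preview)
Your proof is correct and follows the same route as the paper: invoke \cite[Theorem 1.1]{demarco2016bifurcations} to get a non-empty bifurcation locus, argue that this locus is infinite, and then apply \cite[Proposition 2.4]{dujardin2008distribution} on disjoint neighborhoods to produce infinitely many periodic parameters. The only differences are that the paper works directly over $V$ of arbitrary dimension (via the bifurcation current) rather than first slicing to a curve, and that for the infinitude of the bifurcation locus the paper quotes H\"older continuity of the potential and the resulting positive Hausdorff dimension, whereas your ``continuous potential $\Rightarrow$ no atoms $\Rightarrow$ perfect support'' is a slightly more elementary variant of the same idea.
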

\begin{proof}
By  \cite[Theorem 1.1]{demarco2016bifurcations}, the bifurcation locus $\text{Bif}(f_V,a)$ is a non-empty closed set.  Moreover $\text{Bif}(f_V,a)$ is the support of a positive closed current with H\"older continuous potential \cite[Lemma 1.1]{dinh2010dynamics}, hence $\text{Bif}(f_V,a)$  has positive Hausdorff dimension \cite[Theorem 1.7.3]{sibony1999dynamique}, in  particular  $\text{Bif}(f_V,a)$ is an infinite set. By definition, for every $t_0\in \text{Bif}(f_V,a)$ and every open neighborhood $U$ of $t_0$, the family of maps $h_n:U\to \P^1$, $t\mapsto f^n_t(a(t))$ does not form a normal family.  By \cite[Proposition 2.4]{dujardin2008distribution}, there exists $t\in U$ such that $a(t)$ is periodic for $f_t$. Apply the above construction to infinitely many disjoint open subsets $U$ meeting $\text{Bif}(f_V,a)$, we get  infinitely many $t\in V(\C)$ such that $a(t)$ is periodic for $f_t$.
\end{proof}

		For every $\underline{n}:=\left\{n_1,\dots,n_{2d-3}\right\}\in (\N^*)^{2d-3}$ of $2d-3$ tuples,  let $Y_{\underline{n}}\subset \sM_d(\C)$  be the subset of  all conjugacy classes  $[f]$ such that $f$ has $2d-3$ distinct super-attracting cycles of respective exact
	periods $n_1,\dots,n_{2d-3}$.

\begin{lem}\label{curve}
The following statements are  true:
\medskip
\par (1) The Zariski closure of $Y_{\underline{n}}$ is a (possibly reducible) algebraic curve provided that it is not empty and $Y_{\underline{n}}$ is Zariski open in its Zariski closure.
\medskip
\par (2) The set $\cup_{\underline{n}}Y_{\underline{n}}$ is Zariski dense in $\sM_d(\C)$. 
\medskip
\par (3) For every $\underline{n}$ such that  $Y_{\underline{n}}$ is non-empty, and for every irreducible component $Y$ of $Y_{\underline{n}}$, there are infinitely  many $[f]\in Y$ such that $f$ is a hyperbolic PCF map of disjoint type.
\end{lem}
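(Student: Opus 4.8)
The plan is to prove (2) directly and to treat (1) and (3) with the same toolkit: after passing to a finite cover of $\sM_d(\C)$ on which the $2d-2$ critical points are marked, each condition ``the $i$-th critical point is periodic of period dividing $n$'' cuts out a hypersurface; Lemma \ref{bifur}; and the Thurston rigidity principle that any positive-dimensional irreducible family of postcritically finite (PCF) maps is contained in the flexible Latt\`es locus $\Psi(\FL_d(\C))$ --- which contains no map with a super-attracting cycle, because every periodic point of a Latt\`es map has multiplier of absolute value larger than $1$. For (2): if $f$ has $2d-2$ distinct super-attracting cycles of exact periods $n_1,\dots,n_{2d-2}$, then it has $2d-3$ of them of exact periods $n_1,\dots,n_{2d-3}$, so $X_{\underline n}\subseteq Y_{(n_1,\dots,n_{2d-3})}$; choosing sequences with $\min_j n_j\to\infty$ and applying Theorem \ref{GOV} gives that $\bigcup_{\underline n}Y_{\underline n}\supseteq\bigcup_k X_{\underline n(k)}$ is Zariski dense in $\sM_d(\C)$.

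For (1), I would pass to a finite cover $\widehat\sM$ of the dense Zariski-open locus of $\sM_d(\C)$ where $f$ has $2d-2$ distinct simple critical points $c_1,\dots,c_{2d-2}$; this is an affine variety of dimension $2d-2$. For $1\le i\le 2d-3$ the regular function $f^{n_i}(c_i)-c_i$ is not identically zero, since a generic rational map has no periodic critical point, so its zero locus $H_i$ is a hypersurface, and the preimage of $Y_{\underline n}$ equals $(H_1\cap\dots\cap H_{2d-3})\cap U$, where $U$ is the open condition that these cycles have exact periods and are pairwise disjoint. This shows $Y_{\underline n}$ is Zariski-open in its closure, and, as intersecting with a hypersurface lowers dimension by at most one, every component of the closure has dimension $\ge 1$. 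For the reverse inequality, suppose a component $Z$ has $\dim Z\ge 2$; on a dense open part of $Z$ the maps have $2d-2$ distinct critical points, with $c_1,\dots,c_{2d-3}$ periodic of bounded period and a remaining marked critical point $a=c_{2d-2}$. If $a$ were persistently preperiodic on $Z$, then $Z$ would lie in the Zariski-closed locus $\mathcal{P}_N=\{[f]:\#P_f\le N\}$ of maps with postcritical set of cardinality at most $N$, for suitable $N$; every positive-dimensional component of $\mathcal{P}_N$ lies in $\Psi(\FL_d(\C))$, so $\dim\mathcal{P}_N\le 1$, contradicting $\dim Z\ge 2$. Hence $a$ is not persistently preperiodic, so by Lemma \ref{bifur} (applied to any curve in $Z$ through a point where $a$ is not preperiodic) there is an $m$ with $Z\cap\{f^m(a)=a\}$ nonempty; as $Z\not\subseteq\{f^m(a)=a\}$ (else $\dim Z\le 1$ again), this set has pure codimension one in $Z$, hence a component $C$ of dimension $\ge 1$ on which all $2d-2$ critical points are periodic of bounded period. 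Then $C\subseteq\mathcal{P}_{N'}$ forces $C\subseteq\Psi(\FL_d(\C))$, yet every map of $C$ carries the super-attracting cycle of $c_1$ --- contradiction. So $\overline{Y_{\underline n}}$ is a curve.

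For (3), fix an irreducible component $Y$ of $Y_{\underline n}$. By (1) it is a curve, so after a finite base change the tautological family over a smooth model $\widetilde Y$ of $Y$ is a non-isotrivial degree-$d$ algebraic family in which $c_1,\dots,c_{2d-3}$ are persistently periodic, with a remaining marked critical point $a$. As in the proof of (1), $a$ is not persistently preperiodic on $\widetilde Y$ (otherwise $Y$ would be a positive-dimensional PCF family meeting the super-attracting locus, which is impossible), so Lemma \ref{bifur} yields infinitely many $t\in\widetilde Y$ with $a(t)$ periodic for $f_t$. For all but finitely many such $t$, the critical points of $f_t$ are distinct and the cycle of $a(t)$ is disjoint from those of $c_1(t),\dots,c_{2d-3}(t)$: each potential failure is a Zariski-closed condition on the curve $\widetilde Y$ which, if it held identically, would force $a$ to be persistently preperiodic. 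For the remaining infinitely many $t$, the map $f_t$ has $2d-2$ distinct super-attracting cycles, i.e.\ is a hyperbolic PCF map of disjoint type; since $\widetilde Y\to Y$ is finite, these give infinitely many points of $Y$.

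The step I expect to be the main obstacle is the dimension upper bound in (1), equivalently the proper intersection of $H_1,\dots,H_{2d-3}$: the argument above turns a hypothetical component of dimension $\ge 2$ into a positive-dimensional family of PCF maps carrying a super-attracting cycle, which is excluded by Thurston rigidity together with the absence of super-attracting cycles among flexible Latt\`es maps. The accompanying bookkeeping --- constructing the marked cover and the smooth models, descending the loci from $\widehat\sM$ back to $\sM_d(\C)$, and disposing of parameters where critical points collide (these sit in a locus $\mathcal{P}_N$ and are handled identically) --- is routine but has to be done with some care.
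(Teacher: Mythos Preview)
Your proposal is correct and follows essentially the same approach as the paper: pass to a cover with marked critical points, realize the locus as an intersection of $2d-3$ hypersurfaces, and use Lemma~\ref{bifur} together with Thurston rigidity (plus the fact that flexible Latt\`es maps have no super-attracting cycles) to rule out components of dimension $\geq 2$; part (2) via Theorem~\ref{GOV}; part (3) via Lemma~\ref{bifur} on the remaining marked critical point. The only notable differences are cosmetic: the paper works over a cover of $\Rat_d$ rather than over the simple-critical-point locus of $\sM_d$, and for (3) it separates the new super-attracting cycle by forcing its period to exceed $\max_j n_j$ (using finiteness of PCF maps with bounded periods) instead of your Zariski-closedness argument for the collision loci.
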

\begin{proof}
We first show (1). Passing to a finite  morphism $\phi: V\to \Rat_d(\C)$, $t\mapsto f_t$, we can choose an algebraic family $f_V:V\times \P^1\to V\times \P^1$ such that all $2d-2$ critical points in this family can be algebraically parametrized by $c_1,\dots,c_{2d-2}:V\to\P^1$.  	For every fixed $\underline{n}:=\left\{n_1,\dots,n_{2d-3}\right\}\in (\N^*)^{2d-3}$, we define $$Z:=\left\{t\in V(\C): f_t^{n_j}(c_j)=c_j, \;\;\text{for every}\;1\leq j\leq 2d-3\right\}.$$ 
Then $Z$ is a Zariski closed subset in $V$. Since $\phi:V\to \Rat_d(\C)$ is surjective finite,  and $\phi(Z)\subset \Rat_d(\C) $ is invariant by $\PGL_2(\C)$ conjugacy,  $\Psi\circ\phi(Z)$ is Zariski closed in $\sM_d(\C)$, where $\Psi:\Rat_d(\C)\to \sM_d(\C)$ is the quotient morphism.  Moreover by the definition of $Y_{\underline{n}}$,  $Y_{\underline{n}}$ is Zariski open in $\Psi\circ\phi(Z)$. For each irreducible component $\tilde{Z}$ of $Z$, it is the intersection of at most $2d-3$ hypersurfaces, so $\tilde{Z}$ has codimension at most $2d-3$ in $V$.  We claim that  $\tilde{Z}$ has codimension $2d-3$ in $V$. Assume by contradiction it is not the case. Since  critical orbit relations are constant along the fibers of the projection $\Psi\circ\phi:V\to \sM_d(\C)$, $\Psi\circ\phi(\tilde{Z})$ has codimension $k<2d-3$ in $\sM_d(\C)$.  In particular the algebraic family $f_{\tilde{Z}}$ (the restriction of $f_V$ on $\tilde{Z}$) is non-isotrivial.  Apply Lemma \ref{bifur} to $f_{\tilde{Z}}$ and the marked point $c_{2d-2}:V\to \P^1$, there exists a positive integer $m$ such that the Zariski closed subset 
$$W:=\left\{t\in \tilde{Z}:f_t^m(c_{2d-2})=c_{2d-2}\right\}$$
is non-empty.   Moreover, $W$ has codimension at most $k+1<2d-2$.  Then $\Psi\circ\phi(W)$ has codimension at most $k+1<2d-2$, which implies that  $\Psi\circ\phi(W)$ contains a positive dimensional hyperbolic PCF family. This contradicts Thurston's rigidity theorem \cite{Douady1993} which says that the only positive dimensional PCF family in $\sM_d(\C)$ is the flexible Latt\`es family.  Hence $\tilde{Z}$ must  have codimension $2d-3$ in $V$. This implies $\Psi\circ\phi(Z)$ has pure dimension 1. Then (1) holds since $Y_{\underline{n}}$ is  Zariski open in $\Psi\circ\phi(Z)$.

\medskip

\par Next we show (2).  Since  $\cup_{\underline{n}}X_{\underline{n}}\subset \cup_{\underline{n}}Y_{\underline{n}}$, by Lemma \ref{GOV} (2), $\cup_{\underline{n}}Y_{\underline{n}}$ is Zariski dense.

\medskip

\par Finally we show (3). By Lemma \ref{curve} (1) and by Lemma \ref{bifur}, there exist infinitely many $[f]$ in 
$Y_{\underline{n}}$ such that every citical point of $f$ is periodic and  $f$ has $2d-3$ distinct super-attracting cycles of respective exact periods $n_1,\dots,n_{2d-3}$.  Since the set of conjugacy class $[f]$ such that all critical points of $f$ are periodic with bounded periods is finite, there exist infinitely many $[f]$ in 
$Y_{\underline{n}}$ such that $f$ has a periodic critical point with exact periods $N>\max_{1\leq j\leq 2d-3} n_j$, hence these $[f]$ are hyperbolic PCF maps of disjoint type.  This implies (3).
\end{proof}

\section{Generic injectivity of the multiplier spectrum morphism}
\subsection{Intertwined Rational maps}
\begin{defi}\label{defiintert}
Let $d\geq 2$ and $f,g\in\Rat_d(\C)$. We say $f$ and $g$ are \emph{intertwined} if there exists a (maybe reducible) algebraic curve $Z\subset \P^1\times \P^1$ whose projections to both axis are  surjective,
and $Z$ is invariant by the map $f\times g:\P^1\times \P^1\to \P^1\times \P^1$.  
\end{defi}
\begin{defi}\label{simple}
A rational map $f\in\Rat_d(\C)$ is called \emph{simple} if the $f$ has exactly $2d-2$ critical values.  
\end{defi}
Simple rational maps form a Zariski open subset of  $\Rat_d(\C)$ for every $d\geq 2$. 

\medskip

The following theorem is an easy corollary of results of  Pakovich \cite{pakovich2021iterates},  \cite{pakovich2021iterates}. We give a proof here for completeness.
\begin{thm}\label{intertwined}
Let $d\geq 4$. Then there exists a Zariski open subset $W_d$ of $\Rat_d(\C)$ such that :
\begin{points}
	\item Elements in $W_d$ are simple;
	\item Assume that $f$ and $g$ are intertwined, where $f\in W_d$ and $g$ is simple, then $f$ and $g$ are in the same conjugacy class.
\end{points}
\end{thm}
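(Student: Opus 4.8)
\textbf{Proof plan for Theorem \ref{intertwined}.} The strategy is to reduce the statement to the classification results on intertwined rational maps in \cite{pakovich2021iterates}, and to extract from that classification a Zariski open condition that rules out all the ``degenerate'' ways two maps can be intertwined. The underlying principle is that if $f$ and $g$ are intertwined via an invariant curve $Z\subset\P^1\times\P^1$ with both projections onto, then $f$ and $g$ are strongly constrained: either one is obtained from the other by a composition manipulation (an elementary transformation, in the terminology of the introduction), or both share a common iterate up to conjugacy, or one of them has a special form (a Latt\`es map, a power map, a Chebyshev map, or more generally a map with a nontrivial symmetry or decomposition). The key point is that all of these special behaviours are \emph{exceptional}: each cuts out a proper Zariski closed subset of $\Rat_d(\C)$. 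So one defines $W_d$ to be the simple rational maps that avoid all of these exceptional loci, and then for $f\in W_d$ the only remaining possibility in Pakovich's dichotomy is that $f$ and $g$ are conjugate.

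Concretely, I would proceed as follows. First, recall from \cite{pakovich2021iterates} the precise trichotomy (or finer classification) for a pair of intertwined maps of the same degree $d$: after possibly replacing $Z$ by an irreducible component with both projections still dominant and accounting for the degrees of the two projections $\pi_1, \pi_2: Z\to\P^1$, one learns that the degrees $\deg\pi_1$ and $\deg\pi_2$ are equal (call this common value $e$), and that $f$ and $g$ become conjugate after passing through the correspondence $Z$; when $e=1$ this already forces $f$ and $g$ to be conjugate. The content of the theorem is therefore to show that for generic $f$, an intertwining with a simple $g$ can only occur with $e=1$. Pakovich's results say that $e>1$ forces $f$ (and $g$) to lie in one of finitely many families: maps sharing a common decomposition, maps with nontrivial automorphisms (i.e. commuting with a nontrivial M\"obius transformation), Latt\`es/power/Chebyshev maps, or maps that are elementary transformations of one another. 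The hypothesis $d\geq 4$ is used precisely so that these exceptional families are all of positive codimension in $\Rat_d(\C)$ --- for small $d$ some of these loci can be all of $\Rat_d(\C)$ (e.g. every degree-$2$ map is generically problematic).

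Thus I would define
$$
W_d := \{ f\in\Rat_d(\C) : f \text{ is simple, and } f \text{ lies in none of the exceptional loci above}\}.
$$
I would then verify, one family at a time, that each exceptional locus is Zariski closed and proper in $\Rat_d(\C)$ when $d\geq 4$: the locus of non-simple maps is the vanishing of a resultant/discriminant; the locus of maps admitting a nontrivial decomposition $f=h_1\circ h_2$ is the image of a morphism from a lower-dimensional parameter space (decompositions live on a variety of strictly smaller dimension because a generic rational map of composite-able degree is indecomposable, and in any case decomposable maps form a constructible set of positive codimension when $d\geq 4$); the locus of maps with nontrivial symmetry is cut out by the existence of a nontrivial solution to $\sigma\circ f = f\circ\sigma$, again a proper closed condition; and Latt\`es, power, and Chebyshev maps are visibly confined to proper closed subsets (the flexible Latt\`es locus is a curve, rigid Latt\`es maps are finitely many points up to conjugacy, power and Chebyshev maps form finite sets of conjugacy classes). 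A finite union of proper Zariski closed sets is proper Zariski closed, so $W_d$ is Zariski open and nonempty, and property (i) holds by construction. For property (ii): given $f\in W_d$ and $g$ simple with $f,g$ intertwined, apply Pakovich's classification; every branch of it except ``$f$ conjugate to $g$'' would place $f$ in one of the excluded loci, contradicting $f\in W_d$. Hence $f$ and $g$ are conjugate.

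The main obstacle, and the step requiring the most care, is the passage from ``intertwined'' to Pakovich's structural dichotomy --- i.e. correctly invoking the results of \cite{pakovich2021iterates} with the right hypotheses, making sure the invariant curve can be replaced by a component on which \emph{both} projections are onto and keeping track of the branch degrees $\deg\pi_1 = \deg\pi_2$. A secondary but still delicate point is the bookkeeping that every exceptional family is genuinely of positive codimension in $\Rat_d(\C)$ once $d\geq 4$; in particular one must use the hypothesis that $g$ is \emph{simple} (not just $f$), since Pakovich's conclusions are cleanest when neither map has ``extra'' critical identifications, and simplicity of $g$ is what prevents $g$ from being, say, a composition that would let the intertwining survive without forcing conjugacy. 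Once these two points are pinned down, the rest is the routine verification that a finite union of proper closed sets is proper closed.
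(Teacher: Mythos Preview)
Your plan points in the right direction --- reduce to the structural results of \cite{pakovich2021iterates} --- but it misidentifies which results to invoke and thereby misses the clean two-step argument the paper actually uses. The paper does \emph{not} enumerate exceptional loci (decomposable maps, maps with automorphisms, Latt\`es/power/Chebyshev, etc.) and check each is proper closed. Instead it proceeds as follows: (a) by \cite[Theorem 1.2 and Lemma 3.7]{pakovich2021iterates}, for $d\geq 4$ there is a Zariski open set $W_d$ of simple maps such that if $f\in W_d$ shares its maximal entropy measure with some $g$, then $g=f$; (b) by \cite[Theorem 1.4]{pakovich2021iterates}, if $f$ and $g$ are both simple and intertwined, then there exist $m\geq 1$ and $\phi\in\PGL_2(\C)$ with $(\phi g\phi^{-1})^m=f^m$. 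A common iterate forces the same maximal entropy measure, so (a) gives $\phi g\phi^{-1}=f$. That is the entire proof.

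Compared with this, your outline has two soft spots. First, the ``trichotomy'' you attribute to Pakovich (equal projection degrees $e$, with $e=1$ forcing conjugacy and $e>1$ forcing membership in a short list of special families) is not what \cite{pakovich2021iterates} actually states; the relevant output there is the common-iterate conclusion of Theorem~1.4, which needs simplicity of \emph{both} $f$ and $g$ (this is exactly where the hypothesis on $g$ enters). Second, and more seriously, your definition of $W_d$ as the complement of an ad hoc list of exceptional loci is hard to make airtight: you would need to prove that your list exhausts all obstructions to conjugacy, and some items on it (``maps that are elementary transformations of one another'') are conditions on pairs, not on $f$ alone. The paper sidesteps this entirely by letting Pakovich's Theorem~1.2/Lemma~3.7 hand you the correct Zariski open set, characterized dynamically via the measure of maximal entropy rather than via an enumeration. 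If you want to repair your argument, the shortest fix is to replace your case analysis by steps (a) and (b) above.
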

\begin{proof}
 Let $d\geq 4$, by \cite[Theorem 1.2 and Lemma 3.7]{pakovich2021iterates}, there exists a Zariski open set $W_d\subset \Rat_d(\C)$ such that: (1) $W_d$ is contained in the set of simple rational maps, (2) if $f\in W_d$ and $f$ share the maximal entropy measure with another rational map $g\in \Rat_d(\C)$, then $g=f$.  We are going to show that the set $W_d$  satisfies Theorem \ref{intertwined} (ii).  Assume that $f\in W_d$, and $f$ is  intertwined with a simple rational map $g\in\Rat_d(\C)$.  By \cite[Theorem 1.4]{pakovich2021iterates},  there exist $m\geq 1$ and $\phi\in\PGL_2(\C)$ such that $(\phi g\phi^{-1})^m=f^m$.  This implies $f$ and $\phi g\phi^{-1}$ have the same maximal entropy measure, hence we have $f=\phi g\phi^{-1}$.  Hence $f$ and $g$ are in the same conjugacy class.
\end{proof}
\subsection{A DAO-type theorem}

We first introduce some terminology in complex dynamics, which are used in the proof of Theorem \ref{DAOint}. See \cite{ji2023dao} for a reference.

\medskip

{\bf TCE}: A rational map $g:\P^1(\C)\to \P^1(\C)$ is called $\TCE(\la)$ for some $\la>1$ if  there exists $\delta_0>0$ such that for each $n\geq 0$ and $z\in \sJ(g)$, we have 
$$\diam W_n\leq \la^{-n},$$
where $W_n$ is any connected component of $g^{-n}(B(z,\delta_0))$, and $\sJ(g)$ is the Julia set of $g$.

\medskip
{\bf CE}: A rational map $g:\P^1(\C)\to \P^1(\C)$ is called $\CE(\la)$ for some $\la>1$, if there exists $C>0$ and  $N>0$ such that  for every critical points $c\in \sJ_g$ and $n\geq 0$, $$|dg^n(g^N(c))|\geq C\la^n.$$  Here $dg^n$ is the derivative of $g^n$. Moreover $g$ does not have parabolic cycles.
\medskip

Let $f_V$ be an algebraic family of  degree $d$ rational maps parametrized by a smooth quasiprojective curve $V$ over $\C$. Let $c_1,\dots, c_{2d-2}$ the marked critical points.  For every marked critical point $c$ and $n\geq 0$, let $\xi_{c,n}:V(\C)\to \P^1(\C)$ denote the map $\xi_{c,n}(t):=f_t^n(c(t))$. 

\medskip

{\bf PCE}:  A parameter $t_0\in V(\C)$ is called $\PCE(\la)$ for some $\la>1$, if there exists $C>0$ such that for every marked critical point $c$ and $n\geq 0$,  $$\left|\frac{d\xi_{c,n}}{dt}(t_0)\right|\geq C\la^n.$$

\medskip

{\bf PR}:  A parameter $t_0\in V(\C)$ is called $\PR(s)$ for some $s>0$, if there exists an integer $N>0$ such that for every marked critical point $c$,  
$$ \dist(f_{t_0}^n(c(t_0)), \mathcal{C}_{t_0})\geq n^{-s}$$
for every $n\geq N$.  Where $\mathcal{C}_{t_0}$ is the critical set of $f_{t_0}$  and $\dist(\cdot, \cdot)$ is the distance function.

\medskip

The following theorem is similar to the main theorem in \cite{ji2023dao}. In \cite[Theorem 1.2]{ji2023dao}, the proof is for the product map $f_V\times_V  f_V$ on $\P^1\times \P^1$, here we replace this product map by  $f_V\times_V g_V$ on $\P^1\times \P^1$.  Indeed the proof of Theorem \ref{DAO} is easier since we only need to construct a dynamical relation between $f_V$ and $g_V$.

\begin{thm}[=Theorem \ref{DAOint}]\label{DAO}
Let $d\geq 2$ and let $f_V,g_V$ be two degree $d$ non-isotrivial algebraic families parametrized by the same irreducible algebraic curve $V$. Assume $f_V$ and $g_V$  are not  flexible Latt\`es family. Assume that there are infinitely many $t\in V(\C)$ such that $f_t$ and $g_t$ are both PCF. Then for all but finitely many $t\in V(\C)$, $f_t$ and $g_t$ are intertwined. 
\end{thm}

\proof 
Our proof is a modification of the one for \cite[Theorem 1.3]{ji2023dao}.

We first reduce Theorem \ref{DAO} to the case such that $V, f_V, g_V$ are defined over $\overline{\Q}.$
To show this, let $\Psi_f,\Psi_g: V\to \sM_d$ be the morphisms sending $t$ to the conjugacy classes of $f_t$ and $g_t$ respectively.
Denote by $\psi: V\to \sM_d\times \sM_d$ the morphism sending $t$ to $(\Psi_f(t),\Psi_g(t))$ and by $\Gamma$ the Zariski closure of its image. 
Since there are infinitely many $t\in V(\C)$ such that $f_t$ and $g_t$ are both PCF and $f_V,g_V$  are not family of flexible Latt\`es maps, by Thurston's rigidity theorem for PCF maps \cite{Douady1993}, $\Gamma$ is defined over $\overline{\Q}.$
Consider the natural morphism $\Psi\times\Psi: \Rat_d\times \Rat_d\to \sM_d\times \sM_d$.
There is an algebraic curve $V'$ in $(\Psi\times\Psi)^{-1}(\Gamma)$ defined over $\overline{\Q}$ such that $\Psi\times\Psi(V')$ is dense in $\Gamma.$
Then $V'$ defines algebraic families $f'_{V'}, g'_{V'}$ of degree $d$ maps parametrized by $V'$. Both of them are defined over $\overline{\Q}.$
To prove Theorem \ref{DAO} for $f_V,g_V$, we only need to prove it for $f'_{V'}, g'_{V'}$. So we may assume now that $V, f_V, g_V$ are defined over $\overline{\Q}.$

After replacing $V$ by its normalization and then by some finite ramification cover, we may assume that $V$ is smooth and both $f_V$ and $g_V$ have exactly $2d-2$ marked critical points $a_1,\dots, a_{2d-2}$ and $b_1,\dots, b_{2d-2}.$

Now we follow the notations in \cite{ji2023dao}. We denote by $\mu_{f,a_i},\mu_{g,b_i}$ the bifucation measures on $V(\C)$ for pairs $(f_V, a_i)$ and $(g_V, b_i)$ respectively.
By \cite[Theorem 2.5]{dujardin2008distribution} (and DeMarco \cite{demarco2016bifurcations}), $\mu_{f_V,a_i}$  (resp. $\mu_{g_V,b_i}$) vanishes if and only if $a_i$ (resp. $b_i$)
is preperiodic. In this case, they are called \emph{passive}. Otherwise, they are called \emph{active}. Let $\mu_{f_V,\bif}:=\sum_{i=1}^{2d-2}\mu_{f_V,a_i}$ and $\mu_{g_V,\bif}:=\sum_{i=1}^{2d-2}\mu_{g_V,b_i}$ be the bifurcation measures for $f_V$ and $g_V$ respectively. By Thurston's rigidity theorem for PCF maps \cite{Douady1993}, both $\mu_{f_V,\bif}$
and $\mu_{g_V,\bif}$ are non-zero. We may assume that $a_1$ and $b_1$ are active. 

\medskip

By \cite[Corollary 2.4]{ji2023dao}, for every active $a_i$ (resp. $b_i$), $\mu_{f_V, a_i}$  (resp. $\mu_{g_V, b_i}$) and $\mu_{f_V,\bif}$ (resp. $\mu_{g_V,\bif}$) are proportional.   
Moreover  the proof of \cite[Corollary 2.4]{ji2023dao} implies that $\mu_{f_V,\bif}$ and $\mu_{g_V,\bif}$ are proportional.  Let $\mu$ be the probability measure on $V(\C)$ which is proportional to 
$\mu_{f_V,\bif}$ (hence $\mu_{g_V,\bif}$).

\medskip

Aside from the flexible Latt\`es locus, the exceptional maps \footnote{As in \cite[Section 1.1]{ji2023homoclinic}, we call $g$ \emph{exceptional} if it is a Latt\`es map or semiconjugates to a monomial map. } are isolated in $\sM_d(\C)$. As $\mu$ has continuous potential, we have the following property:
\begin{points}
\item[(1)] For $\mu$-a.e. point  $t\in V(\C)$, both $f_t$ and $g_t$ are non-exceptional. 
\end{points}

Let
$\Corr(\P^1)^{f_t\times g_t}_*$ be the set of $f_t\times g_t$-invariant Zariski closed subsets $\Gamma_t\subseteq \P^1\times \P^1$ of pure dimension $1$ such that both $\pi_1|_{\Gamma_t}$ and $\pi_2|_{\Gamma_t}$ are finite, where $\pi_1,\pi_2$ are the first and the second projections.
Let $\Corr^{\flat}(\P^1_{V})^{f_V\times_V g_V}_*$ be the set of $f_V\times_{V}g_V$-invariant Zariski closed subsets $\Gamma\subseteq V\times(\P^1\times \P^1)$ which are flat over $V$ and whose generic fiber is in $\Corr(\P^1_{\eta})^{f_\eta\times g_{\eta}}_*$, where $\eta$ is the generic point of $V$. In general, a correspondence $\Gamma_t\in \Corr(\P^1)^{f_t\times g_t}_*$ may not be contained in any correspondence in $\Corr^{\flat}(\P^1_{V})^{f_V\times_V g_V}_*$.
On the other hand, by the proof of \cite[Proposition 3.11]{ji2023dao}, this is the case if $t$ is \emph{transcendental} i.e. $t\in V(\C)\setminus V(\overline{\Q})$ . 
As $\mu$ has continuous potential, $\mu$-a.e. point  $t\in V(\C)$ is transcendental. 
We then get the following property:
\begin{points}
\item[(2)]For $\mu$-a.e. point  $t\in V(\C)$, every $\Gamma_t\in \Corr(\P^1)^{f_t\times g_t}_*$ is contained in some correspondence in $\Corr^{\flat}(\P^1_{V})^{f_V\times_V g_V}_*$.
\end{points}
Next, we consider some typical non-uniformly hyperbolic conditions. See \cite[Definition 4.1, 5.1 and 8.1]{ji2023dao} for the definitions of such conditions.
By \cite[Proposition 8.3]{ji2023dao}, there exists  $\la_0>1$ such that 
\begin{points}
\item[(3)] for $\mu-$a.e. point $t$, $f_t, g_t$ are $\CE(\la_0)$ hence $\TCE(\la_1)$ for every $1<\la_1<\la_0$, by Przytycki-Rohde \cite{przytycki1998porosity}.
\end{points}
By \cite[Theorem 4.3]{ji2023dao}, which is essentially due to De Th\'elin-Gauthier-Vigny \cite{de2021parametric}, 
\begin{points}
\item[(4)] for $\mu-$a.e. point $t$, $f_t, g_t$ are $\PCE(\la_2)$ for some $1<\la_2<d^{1/2}$.
\end{points}
By \cite[Theorem 4.6]{ji2023dao}, 
\begin{points}
\item[(5)] for $\mu-$a.e. point $t$, $f_t, g_t$ are $\PR(s)$ for some $s>0$.
\end{points}

 Let $a_1$ (resp. $b_1$)  be an active marked critical point of $f_V$ (resp $g_V$). As we have the conditions (1), (3), (4), (5), we may apply the proof of \cite[Proposition 7.8]{ji2023dao} for the active marked points $a_1,b_2$ to show that, for $\mu$-a.e. point $t\in V(\C)$, there is $\Gamma_t\in \Corr(\P^1)^{f_t\times g_t}_*$. By condition (2), $\Corr^{\flat}(\P^1_{V})^{f_V\times_V g_V}_*\neq\emptyset$. This implies that for all but finitely many $t\in V(\C)$, $f_t$ and $g_t$ are intertwined, which concludes the proof.
\endproof

\subsection{Proof of the generic injectivity} 

\proof[Proof of Theorem \ref{main}]
The case $d=2$ was proved by Milnor \cite{Milnor1993} (see also \cite[Theorem 2.45]{Silverman2012}).
The case $d=3$ was proved by Gotou \cite[Theorem 1.2]{Gotou2023} (ee also \cite{Hutz2013a} which is the  errata for \cite{Hutz2013}).

\medskip

Now we assume that $d\geq 4$. Assume by contradiction that $\tau_d$ is not generically injective.  By Theorem \ref{intertwined}, there is 
a non-empty Zariski open subset $U$ of $\sM_d^\star$ such that for every $f\in \Rat_d(\C)$ with $[f]\in U$, $f$  satisfies condition (i) and (ii) in Theorem \ref{intertwined}.
There is a Zariski open subset $W$ of the Zariski closure of $\tau_d(U)$ such that $\tau_d^{-1}(W)\subseteq U$ and $\tau_d|_{\tau_d^{-1}(W)}: \tau_d^{-1}(W)\to W$ if finite \'etale of degree at least $2$.
After shrinking $U$, we may assume that $U=\tau_d^{-1}(W).$ We fix this Zariski open subset $U$.

\medskip
For an algebraic family of rational maps $f$, we let $\Psi_f: V\to \sM_d$ be the morphism sending $t$ to the conjugacy class of $f_t$.  Now  we   construct two non-isotrivial algebraic families $f_V,g_V$  of degree $d$ rational maps parametrized by the same irreducible algebraic curve $V$ such that the following holds:
  There exists $\underline{n}:=\left\{n_1,\dots,n_{2d-3}\right\}\in (\N^*)^{2d-3}$ such that we have
\begin{equation}\label{31}
	\Psi_f(V)\subset Y_{\underline{n}}\cap U,\;\text{and} \;\Psi_g(V)\subset U,
\end{equation}\
where $Y_{\underline{n}}$ is the algebraic curve in Lemma \ref{curve}, and we have 
\begin{equation}\label{equequmul}
\tau_d\circ \Psi_f=\tau_d\circ \Psi_g,
\end{equation}
finally for every $t\in V(\C)$, we have
\begin{equation}\label{equnotconj} \Psi_f(t)\neq \Psi_g(t).
\end{equation}

We now describe this construction. By Lemma \ref{curve} (2), there exists $\underline{n}:=\left\{n_1,\dots,n_{2d-3}\right\}\in (\N^*)^{2d-3}$  such that $Y_{\underline{n}}\cap U\neq \emptyset.$
We pick an irreducible component $Z$ of $Y_{\underline{n}}$ meeting $U.$ By Lemma \ref{curve} (1), $Z$ is an algebraic curve. 
Set $Z_1:=Z\cap U.$ Then $Z'_1:=\tau_d(Z_1)$ is an algebraic  curve. Consider the fiber product $X:=Z_1\times_{Z'_1}(\tau_d|_U)^{-1}(Z'_1).$ Recall that by the construction of $U$, $\tau_d|_U: U\to W$ is finite \'etale of degree at least $2$. 
This implies that there is an irreducible component $\tilde{Z}$ of $X$ which is not the diagonal. 
We denote by $\pi_1:X\to Z_1$ and $\pi_2:X\to  (\tau_d|_U)^{-1}(Z'_1)$ for the first and the second projections. Both of $\pi_1$ and $\pi_2$ are finite \'etale.
Set $Z_2:=\pi_2(\tilde{Z}).$ For every $z\in \tilde{Z}$, $\pi_1(z)\neq \pi_2(z).$

Pick an irreducible curve $C_1'$ in $\Psi^{-1}(Z_1)\subseteq \Rat_d(\C)$, such that $\Psi|_{C_1'}: C_1'\to Z_1$ is dominant, where $\Psi:\Rat_d(\C)\to \sM_d(\C)$ is the quotient morphism.
Pick an irreducible component $C_1$ of $C_1'\times_{Z_1}\tilde{Z}$. Let $\phi_1,\phi_2$ be the projection to the first and the second coordinates.
We then get an algebraic family $h_{1}$ of degree $d$ rational maps parametrized by $C_1$ such that for every $t\in C_1$, the map $f_t$ is the one given by $\phi_1(t)\in \Rat_d(\C)$.
Then the morphism $\Psi_{1}:C_1\to \sM_d(\C)$ sending $t$ to the conjugacy class of $f_t$ is the composition of $\phi_2: C_1\to \tilde{Z}$ and $\pi_1: \tilde{Z}\to Z_1\subseteq \sM_d(\C)$.

Similarly, we get an algebraic family $h_2$ of degree $d$ rational maps parametrized by an irreducible curve $C_2$ and a dominant morphism $\phi_2': C_2\to Z_2$
such that $\Psi_{2}:C_2\to \sM_d(\C)$ sending $t$ to the conjugacy class of $f_t$ is the composition of $\phi_2': C_2\to \tilde{Z}$ and $\pi_2: \tilde{Z}\to Z_2\subseteq \sM_d(\C).$

Pick an irreducible component $V$ of $C_1\times_{\tilde{Z}} C_2$ and denote by $\beta_i, i=1,2$ the projections to the first and the second coordinates.
They induce two algebraic families $f_V,g_V$  of degree $d$ maps parametrized by the same irreducible algebraic curve $V$.
The image of  the morphism $\Psi_f: V\to \sM_d(\C)$ sending $t$ to the conjugacy class of $f_t$  is Zariski dense in $Z_1$ and the image of $\Psi_g: V\to \sM_d(\C)$ sending $t$ to the conjugacy class of $g_t$ is Zariski dense in $Z_2.$ Hence $f_V$ and $g_V$ are non-isotrivial. By our construction (\ref{31}) automatically holds. Moreover we have 
\begin{equation*}\tau_d\circ \Psi_f=\tau_d\circ \Psi_g.
\end{equation*}
As  for every $z\in \tilde{Z}$, $\pi_1(z)\neq \pi_2(z)$, for every $t\in V(\C)$, we have
\begin{equation*} \Psi_f(t)\neq \Psi_g(t).
\end{equation*}
Thus we complete the construction.

\medskip

%
%
We need the following lemma.
\begin{lem}\label{multiplier}
Let $d\geq 2$ and let $f,g\in\Rat_d(\C)$ have the same multiplier spectrum. If 
$f$ is a  hyperbolic PCF map of disjoint type  $\underline{n}:=\left\{n_1,\dots,n_{2d-2}\right\}\in (\N^*)^{2d-2}$, then $g$ is also a  hyperbolic PCF map of disjoint type  $\underline{n}$. 
\end{lem}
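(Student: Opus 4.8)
The plan is to read the combinatorial type of the super-attracting cycles directly off the multiplier spectrum, and to check that it must be the same for $f$ and $g$. First I would translate the hypothesis: having the same multiplier spectrum means that for every $n\ge 1$ the multiset of multipliers of the $N_n=d^n+1$ fixed points of $f^n$ (counted with multiplicity) equals that of $g^n$. In particular, for each $n$ the number $Z_n(f)$ of fixed points of $f^n$ with multiplier $0$ equals the corresponding number $Z_n(g)$ for $g$; write $Z_n$ for this common value.

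Next I would interpret $Z_n$ dynamically. For any $h\in\Rat_d(\C)$ the chain rule gives $dh^n(x)=\prod_{k=0}^{n-1}dh(h^k(x))$, so a fixed point $x$ of $h^n$ has multiplier $0$ precisely when its $h$-cycle meets the critical set of $h$, that is, when $x$ belongs to a super-attracting cycle of $h$ of exact period dividing $n$. Moreover such an $x$ is a \emph{simple} fixed point of $h^n$, since its multiplier is $0\ne 1$, so $h^n(z)-z$ vanishes to order exactly $1$ at $x$. Hence $Z_n(h)=\sum_{m\mid n} m\,a_m(h)$, where $a_m(h)$ is the number of super-attracting cycles of $h$ of exact period $m$. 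By M\"obius inversion the sequence $(Z_n(h))_{n\ge 1}$ determines every $a_m(h)$, hence the whole multiset of periods of the super-attracting cycles of $h$.

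Then I would compare $f$ and $g$. Since $f$ is a hyperbolic PCF map of disjoint type $\underline{n}$, its $2d-2$ super-attracting cycles are pairwise disjoint and each carries at least one critical point; as $f$ has exactly $2d-2$ critical points counted with multiplicity, hence at most $2d-2$ distinct ones, it follows that $f$ has exactly $2d-2$ distinct critical points, all simple, one per cycle, and no further super-attracting cycle. Thus $a_m(f)=\#\{i:n_i=m\}$ for every $m$. Applying the previous step to $g$ and using $Z_n(g)=Z_n(f)$ for all $n$ gives $a_m(g)=a_m(f)$ for every $m$; in particular $g$ has exactly $2d-2$ distinct super-attracting cycles, with exact periods $n_1,\dots,n_{2d-2}$. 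Finally, these $2d-2$ disjoint cycles force $g$ to have at least $2d-2$ distinct critical points, so exactly $2d-2$, all simple and all periodic; hence $g$ is postcritically finite, and therefore a hyperbolic PCF map of disjoint type $\underline{n}$.

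The argument is a counting argument once $Z_n$ has been reinterpreted; the two points that need a little care are the multiplicity bookkeeping in the second step (that each super-attracting periodic point contributes exactly $1$, not a quantity weighted by local degrees, so that $Z_n(h)$ is literally a sum of cycle lengths) and the counting step, used for both $f$ and $g$, turning ``$2d-2$ disjoint super-attracting cycles'' into ``exactly $2d-2$ simple critical points, all periodic'' — this is where the Riemann--Hurwitz constraint $\sum_j(\ell_j-1)=2d-2$ on the local degrees $\ell_j$ of the critical points enters. I do not expect a genuine obstacle beyond these routine verifications.
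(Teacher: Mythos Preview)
Your proof is correct and follows essentially the same approach as the paper's: both read off the super-attracting cycle structure of $g$ from the zeros in the multiplier spectra $S_n$, and then use the Riemann--Hurwitz bound of $2d-2$ critical points to conclude. Your packaging via $Z_n(h)=\sum_{m\mid n} m\,a_m(h)$ and M\"obius inversion is a cleaner formalization of the paper's inductive comparison of the sorted periods $n_j$ and $\tilde n_j$, and you are a bit more explicit than the paper in the final step deducing that $g$ is hyperbolic PCF of disjoint type from having $2d-2$ super-attracting cycles.
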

\begin{proof}
We will show that for every $f\in \Rat_d(\C)$, we can read the number of the critical cycles and length of them from the multiplier spectrum of $f$.

For $n\geq 1$, we view $S_n(f)$ as the multi-set $\{df^n(x)|\,\, x\in \Fix(f^n)\}$. It has exactly $d^n+1$ elements. 
We denote by $u_n$ the number of zeros in $S_n(f)$. It is clear that the sequence $u_n$ is determined by the multiplier spectrum of $f$.
For every $l\geq 1$, denote by $m_l$ the number of critical cycles of length $l$.
It is clear that the sequence $m_l, l\geq 1$ determines the number and the length of the critical cycles.
So we only need to read the sequence $m_l, l\geq 1$ from $u_n, n\geq 1.$
It is clear that $u_n=\sum_{l|n}m_l.$ By Mobius inversion formula, we get $m_n=\sum_{l|n} \mu(l)u_{n/l}$, where $\mu: \N^*\to \{-1,0,1\}$ is the M\"obius function. This concludes the proof.
%
%
%
%
%
\end{proof}

We continue the proof of Theorem \ref{main}. By Lemma \ref{curve} (3), there are infinitely many $t\in V(\C)$ such that $f_t$ is a hyperbolic PCF map of disjoint type. By  (\ref{equequmul}), $f_t$ and $g_t$ have the same multiplier spectrum, hence by Lemma \ref{multiplier}, for such $t$, $g_t$ is also a hyperbolic PCF map of disjoint type. By Theorem \ref{DAO}, after shrinking $V$, $f_t$ and $g_t$ are intertwined for every  $t\in V(\C)$.  Since (\ref{31}) holds and   $f_t$ and $g_t$ are intertwined for all but finitely many $t\in V(\C)$,  by Theorem \ref{intertwined}, $\Psi_f(t)=\Psi_g(t)$ for all but finitely many $t\in V(\C)$. This contradicts (\ref{equnotconj}). We then conclude the proof.
\endproof

\medskip

\proof[Proof of Theorem \ref{thmpoly}]

The proof of Theorem \ref{thmpoly} follows exactly the same line as the proof of  Theorem \ref{main}, with only one different point: In the proof of Theorem \ref{main}, a key ingredient is Theorem \ref{intertwined} for intertwined  rational maps  due to Pakovich, which works only when $d\geq 4.$ So we need to apply the results of Milnor \cite{Milnor1993} and Gotou \cite[Theorem 1.2]{Gotou2023} to treat the $d=2,3$ cases. To prove Theorem \ref{thmpoly}, we may replace Theorem \ref{intertwined} by \cite[Theorem 23 and 24]{favre2022arithmetic} for intertwined polynomials, which works for every $d\geq 2.$
\endproof
	
		\begin{rem}
	In the proof Theorem \ref{main},  instead of using Lemma \ref{multiplier}, we can use the following  fact weaker than Lemma \ref{multiplier}:  if two rational maps $f$ and $g$ have the same multiplier spectrum such that $f$ is PCF, then $g$ is also PCF.  This is a consequence of  \cite[Theorem 1.12]{Ji2023}.
		\end{rem}
		
\begin{rem}\label{remallpcf} There is also a way to prove Theorem \ref{main} considering all PCF maps instead of hyperbolic PCF maps of disjoint type.
In such a way, we may replace Gauthier-Okuyama-Vigny's result \cite[Theorem F]{gauthier2019hyperbolic} (c.f. Theorem \ref{GOV}) by the well-known fact that the PCF parameters are Zariski dense in $\sM_d$. On the other hand, Lemma \ref{multiplier} is not sufficient for the proof and we need to use \cite[Theorem 1.12]{Ji2023} instead. As \cite[Theorem 1.12]{Ji2023} relies on Siegel's deep theorem for integer points and the proof of Lemma \ref{multiplier} is elementary, we choose the current proof.
		\end{rem}
\subsection{An alternative approach}\label{subsectionsimp}
In a private communication, DeMarco and Mavraki told us that when they preparing lectures in Harvard and Toronto in November 2023  \cite{DeMarco2023}, they find a simplification of our proof of  Theorem \ref{main}. In particular, arithmetic equidistribution is not needed.  The aim of this section is to outline their  different approach.

DeMarco-Mavraki's  key observation is  as follows:
A key ingredient of Theorem \ref{main} is Theorem \ref{DAO} (=Theorem \ref{DAOint}) which proves more than what we need. 
The proof of Theorem \ref{DAO} strongly relies on the results and methods developed in author's 
previous paper \cite{ji2023dao}.

First, in the proof of Theorem \ref{main},  we do not need Theorem \ref{DAO} for general families. 
If we choose the way to prove Theorem \ref{main} as in Remark \ref{remallpcf}, we only need to apply Theorem \ref{DAO} for families $f_V,g_V$ having 
the following additional assumptions:
\begin{points}
\item[(A)] For every $t\in V(\C)$, the multiplier spectrums for $f_t$ and $g_t$ are the same. 
\item[(B)]  The curve $V$ is smooth and both $f_V$ and $g_V$ have exactly $2d-2$ marked critical points $a_1,\dots, a_{2d-2}$ and $b_1,\dots, b_{2d-2}$. Moreover all these marked critical points except $a_1,b_1$ 
 are strictly preperiodic at every $t\in V(\C)$.
\end{points}
Second, in Theorem \ref{DAO}, we proved that $f_t$ and $g_t$ are intertwined for all but finitely many $t\in V(\C)$. However, to  prove Theorem \ref{main}, we only need one such $t.$
So we may replace Theorem \ref{DAO} with  the following weaker result.
\begin{lem}\label{lemdmweak}
Let $f_V, g_V$ be families as in Theorem \ref{DAO}. We further assume $(A), (B)$ hold. Then there is $t_0\in V(\C)$ such that $f_{t_0}$ and $g_{t_0}$ are intertwined.
\end{lem}
The proof of Lemma \ref{lemdmweak} follows  the same strategy of  the proof of Theorem \ref{DAO}.
But there are two important steps become easier. 
After replacing $V$ by its normalization and then by some finite ramification cover, we may assume that $V$ is smooth and both $f_V$ and $g_V$ have exactly $2d-2$ marked critical points $a_1,\dots, a_{2d-2}$ and $b_1,\dots, b_{2d-2}.$

In the first step of the proof of Theorem \ref{DAO}, we show that there is a probability measure on $V(\C)$ which is proportional to $\mu_{f_V, a_i}$  (resp. $\mu_{g_V, b_i}$) for every active $a_i$ (resp. $b_i$).
The proof is based on Yuan-Zhang's deep equidistribution theorem \cite[Theorem 6.2.3]{yuan2021adelic} on quasi-projective varieties.
The corresponding step for Lemma \ref{lemdmweak} is much easier. By assumption (B), $a_1,b_1$ are the only active marked points for $f_V$ and $g_V$ respectively. Hence $\mu_{f_V, a_1}$ (resp. $\mu_{g_V, b_1}$) is proportional to $\mu_{f_V,\bif}$ (resp. $\mu_{g_V,\bif}$). By \cite{Berteloot2010} the Lyapunov exponent $L(h)$ for every rational function $h$ on $\P^1(\C)$ can be computed using its mutiplier spectrum. Then (A) implies that $L(f_t)=L(g_t)$ for every $t\in V(\C).$ By 
\cite{DeMarco2001,DeMarco2003},  the function $t\in V(\C)\mapsto L(f_t)$ (resp.  $t\in V(\C)\mapsto L(g_t)$) is a potential of 
$\mu_{f_V,\bif}$ (resp. $\mu_{g_V,\bif}$). This gives a logically easier proof of this step for Lemma \ref{lemdmweak}.

Another crucial step to prove Theorem \ref{DAO} is to construct similarities between the bifurcation measure on the parameter space $V(\C)$ and the maximal entropy measures on the phase spaces.  As the conclusion of Theorem \ref{DAO} is for all but finitely many parameters in $V(\C)$, we construct such similarities for $\mu$-a.e. $t\in V(\C)$.  
This step is essentially done in \cite{ji2023dao}, which is technically difficult. We first show that for a $\mu$-generic parameter $t$, $f_t$ and $g_t$ satisfy some Collet-Eckmann-type conditions (which can be thought as some weak hyperbolicity). We then built the similarities under such conditions. As our hyperbolicity assumption is very weak, we can not apply the previous methods as in \cite{Tan1990,favre2022arithmetic,gauthier2018dynamical} to construct similarities. Actually, our proof is based on subtle estimates of  distortions for non-injective maps and a suitable binding argument. 
This step becomes much easier for Lemma \ref{lemdmweak}, as we only need to construct the similarities at a single parameter $t_0$. 
By \cite[Theorem 0.1]{Dujardin2013}, there is a dense set $P$ of parameters $t\in \Supp\, \mu_{f_V,a_1}$ such that $a_1$ is transversally pre-repelling at
$t$. By (B), for every $t\in P$, $f_t$ is PCF with no periodic critical point. 
By \cite[Theorem 1.12]{Ji2023}, for every $t\in P$, $g_t$ is also PCF. Moreover,
the proof of Lemma \ref{multiplier} implies that  $g_t$ has no critical periodic point. In particular, $b_1(t)$ is pre-repelling for $g_t$.
Pick any point $t_0\in P$, it is easy to get the similarities at $t_0$ applying the method in \cite{favre2022arithmetic,gauthier2018dynamical}.

	\end{document}